\DeclareMathOperator{\Hom}{Hom\,}           
\DeclareMathOperator{\Spec}{Spec}
\author[\'A. Gonz\'alez-Prieto]{\'Angel Gonz\'alez-Prieto}
\address{Departamento de Matem\'aticas, Facultad de Ciencias, Universidad Aut\'onoma de Madrid. C.\ Francisco Tomás y Valiente, 7, 28049 Madrid, Spain.}
\address{Instituto de Ciencias Matem\'aticas (CSIC-UAM-UC3M-UCM), C.\ Nicol\'as Cabrera 15, 28049 Madrid, Spain.}
\email{angel.gonzalezprieto@uam.es}
\author[V. Mu\~{n}oz]{Vicente Mu\~{n}oz}
\address{Departamento de \'Algebra, Geometr\'ia y Topolog\'ia, Facultad de Ciencias, Universidad de M\'alaga, Campus de Teatinos s/n, 29071 Málaga, Spain.}
\email{vicente.munoz@uma.es}
\title[The point counting problem in representation varieties]{The point counting problem in\\representation varieties of torus knots}
\keywords{Torus knots, representation varieties, affine group, finite fields}
\subjclass[2020]{14G15, 14D20, 20G15, 14C30}
\begin{document}

\newtheorem{thm}{Theorem}[section]
\newtheorem{prop}[thm]{Proposition}
\newtheorem{lem}[thm]{Lemma}
\newtheorem{cor}[thm]{Corollary}
\newtheorem{conjecture}{Conjecture}
\newtheorem*{conjecture*}{Conjecture}
\newtheorem*{theorem*}{Theorem}

\theoremstyle{definition}
\newtheorem{defn}[thm]{Definition}
\newtheorem{ex}[thm]{Example}
\newtheorem{as}{Assumption}
\theoremstyle{remark}
\newtheorem{rmk}[thm]{Remark}
\theoremstyle{remark}
\newtheorem*{prf}{Proof}

\newcommand{\cA}{\mathcal{A}}
\newcommand{\cC}{\mathcal{C}}
\newcommand{\cD}{\mathcal{D}}
\newcommand{\cE}{\mathcal{E}}
\newcommand{\cF}{\mathcal{F}}
\newcommand{\cG}{\mathcal{G}} 
\newcommand{\cI}{\mathcal{I}} 
\newcommand{\cO}{\mathcal{O}} 
\newcommand{\cM}{\mathcal{M}} 
\newcommand{\cN}{\mathcal{N}} 
\newcommand{\cP}{\mathcal{P}} 
\newcommand{\cQ}{\mathcal{Q}} 
\newcommand{\cS}{\mathcal{S}} 
\newcommand{\cU}{\mathcal{U}} 
\newcommand{\cJ}{\mathcal{J}}
\newcommand{\cX}{\mathcal{X}}
\newcommand{\cT}{\mathcal{T}}
\newcommand{\cV}{\mathcal{V}}
\newcommand{\cW}{\mathcal{W}}
\newcommand{\cB}{\mathcal{B}}
\newcommand{\cR}{\mathcal{R}}
\newcommand{\cH}{\mathcal{H}}
\newcommand{\cZ}{\mathcal{Z}}

\newcommand{\QQ}{\mathbb{Q}} 
\newcommand{\FF}{\mathbb{F}} 
\newcommand{\PP}{\mathbb{P}} 
\newcommand{\HH}{\mathbb{H}} 
\newcommand{\RR}{\mathbb{R}} 
\newcommand{\ZZ}{\mathbb{Z}} 
\newcommand{\NN}{\mathbb{N}} 
\newcommand{\DD}{\mathbb{D}} 
\newcommand{\CC}{\mathbb{C}} 

\newcommand{\bk}{\mathbf{k}}
\newcommand{\too}{\longrightarrow}
\newcommand{\G}{\Gamma}         
\newcommand{\id}{\mathrm{id}}
\newcommand{\Ker}{\textrm{Ker}\,}
\newcommand{\coker}{\textrm{coker}\,}
\newcommand{\x}{\times}
\newcommand\HS[1]{\mathbf{HS}^{#1}}
\newcommand\MHS[1]{\mathbf{MHS}}
\newcommand\Var[1]{\mathbf{Var}_{#1}}
\newcommand\K[1]{\mathrm{K}#1}
\newcommand\AGL[1]{\mathrm{AGL}_{#1}(\bk)}
\newcommand\AGLC[1]{\mathrm{AGL}_{#1}(\CC)}
\newcommand\GL[1]{\mathrm{GL}_{#1}(\bk)}
\newcommand\GLC[1]{\mathrm{GL}_{#1}(\CC)}
\newcommand\SL[1]{\mathrm{SL}_{#1}(\bk)}
\newcommand\SLC[1]{\mathrm{SL}_{#1}(\CC)}
\newcommand\PGL[1]{\mathrm{PGL}_{#1}(\bk)}
\newcommand\PGLC[1]{\mathrm{PGL}_{#1}(\CC)}
\newcommand\Rep[1]{\mathfrak{X}_{#1}}
\newcommand\Id{\mathrm{Id}}
\newcommand\Char[1]{\cR_{#1}}
\newcommand{\tr}{\textrm{tr}\,}             
\newcommand\Gr{\textrm{Gr}}
\newcommand{\red}{\mathrm{red}}
\newcommand{\irr}{\mathrm{irr}}
\newcommand{\charF}{\mathrm{char}}

\begin{abstract}
We compute the motive of the variety of representations of the torus knot of type $(m,n)$ into the affine groups $\AGL{1}$ and $\AGL{2}$ for an arbitrary field $\bk$. In the case that $\bk = \FF_q$ is a finite field this gives rise to the count of the number of points of the representation variety, while for $\bk = \CC$ this calculation returns the $E$-polynomial of the representation variety. We discuss the interplay between these two results in sight of Katz theorem that relates the point count polynomial with the $E$-polynomial. In particular, we shall show that several point count polynomials exist for these representation varieties, depending on the arithmetic between $m,n$ and the characteristic of the field, whereas only one of them agrees with the actual $E$-polynomial.
\end{abstract}
\maketitle

\vspace{-0.4cm}

{{\em Dedicated to Prof.\ Themistocles M.\ Rassias on the occasion of his 70th birthday, 
with our utmost gratitude for his kindness at all moments.}}


\vspace{-0cm}

\section{Introduction}
Given a manifold $M$, the variety of representations $\rho: \pi_1(M) \to G$ of its fundamental group into an algebraic group $G$ contains information on the topology of $M$. It is especially
interesting for $3$-dimensional manifolds, where the fundamental
group and the geometrical properties of the manifold are 
strongly related \cite{CS}. 
This can be used to study knots $K\subset S^3$, by analyzing the representation variety for $G= \SLC{2}$ of the fundamental group of the knot complement
$S^3-K$. 
In this paper we focus in the case in which $K$ is a torus knot, which is the first family of knots where the computations are
rather feasible. The geometry of $\SLC{2}$-representation varieties of torus knots as been described in \cite{MO,Mu}. For $\SLC{3}$, it has been carried out in \cite{MP}, and 
the case of $\SLC{4}$ has been addressed in \cite{GPM} through a computer-aided proof. 

The case in which $M$ is a closed orientable surface has also been extensively analyzed due to their prominent role in non-abelian Hodge theory \cite{HT,Hi}. In these cases, the approach has a geometric flavor focused on obtaining explicit descriptions of algebro-geometric invariants of the representation variety. This is at the heart of much recent research that justifies the study of the geometry of representation varieties of surface groups, 
in particular their Hodge numbers and $E$-polynomials (defined in Section \ref{sec:Hodge}).

An existing approach to address this problem is the so-called \emph{geometric method}, initiated by  Logares, Mu\~noz and Newstead in \cite{LMN}, which aims to compute $E$-polynomials of representation varieties of surface groups. In this method, the representation variety is chopped into 
simpler strata for which the $E$-polynomial can be computed. Following this idea, in the case $G=\SLC2$, 
the $E$-polynomials were computed in a series of papers \cite{LMN,MM,MM:2016} and for
$\PGLC2$ in \cite{Martinez:2017}.

Additionally, in the papers \cite{LMN,MM}, the authors showed that a recursive pattern underlies the computations. This led to another approach, the \emph{quantum method}, initiated by Gonz\'alez-Prieto, Logares and Mu\~noz in \cite{GPLM-2017}. This method is based on the existence of a Topological Quantum Field Theory (TQFT) that provides a powerful machinery to compute $E$-polynomials of
representation varieties. Furthermore, the quantum method also enables the computation of the motive of the representation variety in the Grothendieck ring of algebraic varieties, a subtler invariant than the $E$-polynomial that will be of interest for the purposes of this work 
\cite{GP:2018a,GPLM-2017}.

A different landscape of the problem is drawn by the third method present in the literature, the \emph{arithmetic method}. This approach,
introduced by Hausel and Rodr\'iguez-Villegas in \cite{Hausel-Rodriguez-Villegas:2008}, is inspired in the Weil conjectures and aims to compute the number of points of the representation variety over finite fields. In \cite{Hausel-Rodriguez-Villegas:2008} the authors obtained the $E$-polynomials for $\GLC{r}$ in terms of generating functions, and Mereb \cite{mereb} studied this case for $\SLC{r}$, giving
an explicit formula for the $E$-polynomial in the case $\SLC{2}$.
Moreover, using this technique, explicit expressions of the $E$-polynomials have been computed \cite{Baraglia-Hekmati:2016}
for orientable surfaces with $G= \GLC{3}$, $\SLC{3}$ and for non-orientable surfaces with $G= \GLC2$, $\SLC2$.  

The arithmetic method deeply relies in a theorem of Katz contained in \cite{Hausel-Rodriguez-Villegas:2008}, 
that provides a link between arithmetic geometry and complex geometry. Let us describe briefly the main ideas of this result (for a more detailed account, refer to \cite{Hausel-Rodriguez-Villegas:2008}). 
Let $X$ be a complex variety and suppose that it makes sense to consider $X$ also over finite fields $X(\FF_q)$ (for instance, because the equations defining $X$ have integral coefficients). Hence, $X(\FF_q)$ is a finite collection of points and we can count them. Suppose moreover that there exists a polynomial $P(t) \in \ZZ[t]$ such that $P(q) = |X(\FF_q)|$ for all large enough $q$. In this case, we will say that $X$ is \emph{asymptotically polynomial count} and the polynomial $P(t)$ will be called the \emph{counting polynomial}. In that case, Katz theorem proves that the $E$-polynomial of the complex variety $X$ is actually a polynomial in the product variable $q=uv$ and coincides with the counting polynomial. In this way, the arithmetic approach is indeed a method of computing solutions of equations on finite fields.

In this work, we focus on the group $G=\AGL{r}$ of affine automorphisms of the $r$-dimensional affine space 
$\bk^r$ over an arbitrary field $\bk$ (possibly not algebraically closed). We will study the space of $\AGL{r}$-representations of torus knots of type $(m,n)$, 
denoted $\Rep{m,n}(\AGL{r})$, and we shall provide an explicit computation of their motives for ranks $r = 1, 2$. This result extends the previous work \cite{proceedings-Rassias} where the motive of the representation variety in the case $\bk = \CC$ was computed using the geometric method. Explicitly, we prove the following result:

\begin{thm} \label{thm:main}
Let $m,n$ be natural numbers with $\gcd(m,n)=1$. Let $\bk$ be any field with characteristic $\charF(\bk)$ not dividing $n$ and $m$ or $\charF(\bk)=0$.
Denote by $\xi_{l}^{\bk}$ the number of $l$-th roots of unity in $\bk$. Then, the motive of the $\AGL{1}$-representation 
variety of the $(m,n)$-torus knot in the Grothendieck ring $\K{\Var{\bk}}$ of $\bk$-algebraic varieties is:
\begin{align*}
	\big[\Rep{m,n}&(\AGL{1}) \big] = ( \xi_{nm}^{\bk}-\xi_{n}^{\bk}-\xi_{m}^{\bk}+2)(q^2-q),
\end{align*}
where $q = [\bk] \in \K{\Var{\bk}}$ is the Lefschetz motive.

Suppose in addition that $n,m$ are both odd and $\charF(\bk) \neq 2$. Then the motive of the $\AGL{2}$-representation variety of the $(m,n)$-torus knot is:
\begin{align*}
	\big[\Rep{m,n}(\AGL{2})\big] =& 
	\;\frac14 \left(q^7 + q^6 + q^5 - 5q^4 + 2q^3\right) + \frac14 \left(\xi^\bk_m\xi^\bk_n - \xi^\bk_m - \xi^\bk_n\right)(q - 1)^2(q - 2)(q+1)q^3  \\
     & \;+ \frac14(\xi^{\bk}_m-1)(\xi^{\bk}_n-1)(q^5-q^3)\bigg(2\left(\xi^{\bk}_m\xi^{\bk}_n-\xi^{\bk}_m-\xi^{\bk}_n + 2\right)(q - 1) \\
     & \;\;\;\;\;\;\;\;\;\;\;\;+ (q - 1)(q - 2)\left(\left(\xi^{\bk}_m-2)(\xi^{\bk}_n-2\right)q+\xi^{\bk}_m\xi^{\bk}_n-4\right)\bigg).
\end{align*}
\end{thm}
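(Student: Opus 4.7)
The plan is to exploit the classical presentation $\pi_1(S^3\setminus K_{m,n}) = \langle a,b \mid a^m = b^n\rangle$, so that $\Rep{m,n}(G) = \{(A,B)\in G^2 : A^m = B^n\}$ for any group $G$. Writing an element of $\AGL{r}=\GL{r}\ltimes \bk^r$ as $(M,v)$, a short induction yields $(M,v)^k=(M^k,\Phi_k(M)v)$ with $\Phi_k(M)=I+M+\cdots+M^{k-1}$. Hence, setting $A=(M,v)$ and $B=(N,w)$, the relation $A^m=B^n$ splits as
\begin{equation*}
M^m = N^n, \qquad \Phi_m(M)\,v \;=\; \Phi_n(N)\,w.
\end{equation*}
This presents the projection $\Rep{m,n}(\AGL{r})\to \Rep{m,n}(\GL{r})$ as a map whose fiber over $(M,N)$ is an affine subspace of $\bk^{2r}$ whose dimension is governed by the ranks of $\Phi_m(M)$ and $\Phi_n(N)$.

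For the $\AGL{1}$ statement, $M=\alpha$ and $N=\beta$ lie in $\bk^*$, and $\Phi_k(\alpha)$ vanishes exactly when $\alpha\neq 1$ is a $k$-th root of unity (the hypothesis on $\charF(\bk)$ ensures $\Phi_k(1)=k\neq 0$). Since $\gcd(m,n)=1$, the morphism $t\mapsto (t^n,t^m)$ is an isomorphism $\bk^*\xrightarrow{\sim} \{(\alpha,\beta)\in(\bk^*)^2 : \alpha^m=\beta^n\}$, and under this identification the locus $\alpha^m=\beta^n=1$ corresponds to the group $\mu_{mn}(\bk)$ of $mn$-th roots of unity, of motive $\xi_{mn}^{\bk}=\xi_m^{\bk}\xi_n^{\bk}$ (the last identity following from $\mu_{mn}(\bk)=\mu_m(\bk)\cdot\mu_n(\bk)$ with trivial intersection, valid because $\gcd(m,n)=1$). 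Partitioning the base according to whether each of $\alpha,\beta$ equals $1$ and whether $\alpha^m=\beta^n=1$, and summing the motives of the affine fibers, a short calculation yields the first formula.

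For the $\AGL{2}$ statement the same bundle structure is used, but the stratification is substantially richer. I would decompose $\GL{2}$ into scalar, regular semisimple and Jordan-block strata, and then stratify $\Rep{m,n}(\GL{2})$ by the conjugacy types of $M$ and $N$ together with the type of the common power $C=M^m=N^n$. On each admissible stratum I would first compute the motive of the varieties $\{M : M^m=C\}$ and $\{N : N^n = C\}$, using that the $k$-th roots of a scalar $\lambda I$ in $\GL{2}$ consist of $\xi_k^{\bk}$ central elements together with a family of non-scalar roots whose motive can be read off from an adjoint orbit; the hypotheses $m,n$ odd and $\charF(\bk)\neq 2$ rule out the extra $-I$ ambiguity when taking roots of scalars, preventing sign-induced branching of the count. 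Next, I would determine the ranks of $\Phi_m(M)$, $\Phi_n(N)$ from the eigenvalues of $M$, $N$ (each eigenvalue $\lambda\neq 1$ that is a nontrivial $m$-th root of unity contributes a rank drop of $\Phi_m(M)$), extract the dimension of the affine fiber, and multiply the motive of the stratum by the corresponding power of $q$. Summing over all strata and collapsing using $\xi_{mn}^{\bk}=\xi_m^{\bk}\xi_n^{\bk}$ should assemble into the displayed polynomial.

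The main obstacle is the bookkeeping of strata in the $\AGL{2}$ computation. Particularly delicate are the loci where $\Phi_m(M)$ or $\Phi_n(N)$ drops rank, so that the linear compatibility $\Phi_m(M)v=\Phi_n(N)w$ either becomes inconsistent (cutting out a strictly smaller stratum) or admits a larger solution space, and the mixed loci where $M$ and $N$ have different Jordan types while sharing the common power $C$. Organising these contributions so that the products and differences $\xi_m^{\bk}\xi_n^{\bk}$, $(\xi_m^{\bk}-1)(\xi_n^{\bk}-1)$, and their further combinations collapse to the compact expression stated in the theorem is where the bulk of the computational effort lies.
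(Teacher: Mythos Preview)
Your setup and the $\AGL{1}$ computation match the paper essentially line for line: the same fibration over $\Rep{m,n}(\GL{1})\cong\bk^*$, the same stratification by the vanishing of $\Phi_m,\Phi_n$, and the same use of the coprime isomorphism $t\mapsto(t^n,t^m)$.

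For $\AGL{2}$ your outline is workable but diverges from the paper's organisation in one important respect. The paper's primary dichotomy is not the conjugacy type of $M$, $N$, or $C=M^m=N^n$, but whether the $\GL{2}$-representation $(M,N)$ is \emph{irreducible} or \emph{reducible} over $\overline{\bk}$. The point is that Schur's lemma forces $C$ to be scalar on the irreducible locus; this single structural fact is what makes that stratum tractable (it reduces to counting pairs of diagonalisable matrices with prescribed eigenvalue constraints and no common eigenvector, which the paper parametrises via the configuration space $(\PP_\bk^1)^4\setminus\Delta_c$ modulo $\PGL{2}$). Your stratification by the type of $C$ would eventually rediscover this---when $C$ is non-scalar, both $M$ and $N$ lie in its centraliser and are therefore simultaneously triangularisable---but you should make that deduction explicit, since without it the ``regular semisimple $M$, regular semisimple $N$'' stratum is not directly manageable. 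The reducible side is then handled exactly as you suggest, by the three shapes (diagonal with distinct eigenvalues, scalar, Jordan block).

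Two smaller corrections. First, the system $\Phi_m(M)v=\Phi_n(N)w$ in the unknowns $(v,w)$ is homogeneous, hence always consistent; rank drops only enlarge the fibre and never cut out empty loci, so there is no ``inconsistent'' case to worry about. Second, the hypothesis that $m,n$ are odd and $\charF(\bk)\neq 2$ is not about a $-I$ ambiguity in taking roots of scalars. Its role in the paper is to guarantee that the eigenvalues of $M$ and $N$, a priori only in $\overline{\bk}$, already lie in $\bk$: if $\lambda$ satisfies a quadratic over $\bk$ and $\lambda^m\in\bk$ with $m$ odd, a short binomial argument forces $\lambda\in\bk$. Without this, over a non-closed field the regular-semisimple strata would contain matrices whose eigenvalues are Galois-conjugate in a quadratic extension, and your motive count would acquire extra terms that do not appear in the stated formula.
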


As a by-product of the computation of this motive, just by seeing $q$ as a formal variable, we directly obtain the number of points of $\Rep{m,n}(\AGL{r})$ over a finite field $\bk = \FF_q$ as well as its $E$-polynomial over $\bk = \CC$ (where we have $\xi_l^{\CC} = l$ and the result agrees with \cite{proceedings-Rassias}). In some sense, this result jeopardizes the arithmetic method. Indeed, the number of $l$-th roots of unity $\xi_l^{\FF_q}$ in a finite field $\FF_q$ strongly depends on the divisibility of $l$ and $q$. For this reason, Theorem \ref{thm:main} shows that, instead of a single counting polynomial that recovers the number of points on finite fields, several counting polynomial arise depending on the arithmetic between $n,m$ and $q$, reflecting the fact that several trends in the growth of the number of points co-exist.

Among these trends, only one of them is `the right one' in the sense that its counting polynomial agrees with the $E$-polynomial. Nevertheless, unraveling it may be a difficult task. Indeed, as we shall show in Section \ref{sec:counting}, this `right trend' is a minority: in terms of density, for most of the prime powers $q$ the number of points of the representation variety over $\FF_q$ lies in a wrong trend. This poses an epistemological problem: unless you know the $E$-polynomial beforehand, you cannot detect the right trend that computes it.

Furthermore, Theorem \ref{thm:main} also evidences that the gap between polynomial counting and motivic theory is bigger than expected. Given a complex variety $X$, it may happen that $X$ lies in the subring of $\K{\Var{\CC}}$ generated by the Lefschetz motive, that is, $[X] = P(q)$ for some polynomial $P(t) \in \ZZ[t]$. In that case, we shall say that $X$ is \emph{polynomial motivic} and $P$ will be its \emph{motivic polynomial} (for further information, refer to Section \ref{sec:counting}).
This is the natural analogue of the polynomial count property in the motivic framework. In some sense, the motivic polynomial is reflecting some kind of transcendental-arithmetic property that instantiates as the number of points over finite fields and as the motive over $\CC$. For this reason, it makes sense to pose the following conjecture which, in some form or another, has been implicitly considered in many previous works on representation varieties.

\begin{conjecture}\label{conj:naive}
A complex variety is asymptotically polynomial count if and only if it is polynomial motivic, and the counting and motivic polynomials agree.
\end{conjecture}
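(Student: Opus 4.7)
The plan is to separate the biconditional into its two implications, which require genuinely different techniques and have very different difficulty levels.

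For the direction polynomial motivic $\Rightarrow$ asymptotically polynomial count, I would argue as follows. By hypothesis, $[X] = P(q) \in \K{\Var{\CC}}$, and this equality is witnessed by finitely many scissor relations $[Y_i] = [U_i] + [Z_i]$ among finitely many complex varieties. A standard spreading-out argument allows all these varieties and relations to descend to a finitely generated $\ZZ$-subalgebra $R \subset \CC$, giving integral models over $\Spec R$. For all but a finite set of primes $p$, one can specialize to geometric fibers over closed points of $\Spec R$ and the scissor relations persist, producing a class equality in $\K{\Var{\FF_q}}$. Since point counting $Y \mapsto |Y(\FF_q)|$ factors through a ring homomorphism $\K{\Var{\FF_q}} \to \ZZ$ sending the Lefschetz motive to $q$, one concludes $|X(\FF_q)| = P(q)$ for all sufficiently large $q$, and the counting polynomial is forced to agree with the motivic one. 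First I would formalize the specialization morphism $\K{\Var{R}} \to \K{\Var{\FF_q}}$ and verify compatibility with point counting; beyond that, this direction should be essentially mechanical modulo careful bookkeeping of the excluded primes.

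The converse, asymptotically polynomial count $\Rightarrow$ polynomial motivic, is the genuinely hard direction. Katz's theorem recalled in the introduction already yields that the $E$-polynomial equals $P(uv)$, so $X$ has pure Hodge--Tate cohomology concentrated on diagonal $(p,p)$-pieces and is balanced between weight and cohomological degree. To upgrade this Hodge-theoretic statement to a class equality in $\K{\Var{\CC}}$, the natural route is to exploit Bittner's presentation of $\K{\Var{\CC}}$ by smooth projective varieties modulo blow-up relations: stratify $X$ and apply resolution of singularities to reduce to the smooth projective case, then for each smooth projective piece $Y$ establish that $[Y] \in \ZZ[q]$. Because the subring $\ZZ[q] \subset \K{\Var{\CC}}$ injects into $\ZZ[u,v]$ under the $E$-polynomial map, the motivic polynomial is unambiguous when it exists, so the only question is existence.

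The main obstacle is precisely this last step. The Grothendieck ring $\K{\Var{\CC}}$ is vastly larger than its image under the $E$-polynomial morphism and, by Borisov's theorem, even contains zero divisors, so two varieties with identical $E$-polynomial need not share the same motivic class. Hence Katz's theorem alone cannot bridge the gap, and the reduction to smooth projective pieces amounts to a motivic Tate-type statement for each stratum that seems to require a substantial new input, such as a case of the Tate conjecture or an explicit cellular decomposition. A reasonable intermediate goal would therefore be to restrict the conjecture to the class of representation varieties, where the families computed in Theorem~\ref{thm:main} and in \cite{MM,MM:2016,Martinez:2017} all do satisfy the statement; distilling a uniform mechanism in that restricted setting (for instance, by iteratively stratifying $\Rep{m,n}(G)$ along conjugacy-class types and verifying that every stratum is a fibration with motivically Tate fibers) may well be feasible even if the fully general conjecture remains out of reach.
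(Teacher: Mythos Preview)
The fundamental problem is that you are attempting to \emph{prove} a statement that the paper explicitly \emph{refutes}. Conjecture~\ref{conj:naive} is false, and the paper establishes this by exhibiting the $\AGLC{1}$-representation variety of a torus knot as a counterexample: by the computation in Section~\ref{sec:AGL1} its motive over $\CC$ equals $((n-1)(m-1)+1)(q^2-q)$, so it is polynomial motivic, yet Corollary~\ref{cor:counterexample-asymp-count} shows (via Dirichlet's theorem) that the point counts over $\FF_q$ follow genuinely different polynomial trends depending on the residue of $q$ modulo $nm$, so the variety is not asymptotically polynomial count.

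Your sketch for the direction ``polynomial motivic $\Rightarrow$ asymptotically polynomial count'' contains a concrete and fatal gap, and the paper's counterexample pinpoints it. When you spread out the scissor relations witnessing $[X]=P(q)$, you correctly observe that everything descends to a finitely generated subring $R\subset\CC$. But $R$ need not be $\ZZ$: in the torus knot example the relevant stratification of $\Rep{m,n}(\AGLC{1})$ hinges on the finite set $\Omega_{m,n}^{\CC}$ of $nm$-th roots of unity, so any spreading out of those scissor relations forces $R$ to contain $\ZZ[\xi_n,\xi_m]$. Specialization to $\FF_q$ then requires an embedding $R\hookrightarrow\FF_q$, which exists only when $nm\mid q-1$; for all other prime powers the spread-out decomposition simply does not reduce, and indeed the point count is different. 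What your argument actually proves is the weaker implication ``polynomial motivic $\Rightarrow$ strongly polynomial count'', which is precisely the content of Corollary~\ref{cor:strongly} and motivates the corrected Conjecture~\ref{conj:motivic-strong}. The phrase ``for all but a finite set of primes $p$'' in your sketch is the unjustified step: the obstruction is not finitely many bad primes but an infinite arithmetic progression of them.
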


However, despite that Conjecture \ref{conj:naive} is valid for all the previously known examples of representation varieties of surface groups, it turns out that it does not hold true for the varieties of Theorem \ref{thm:main}. In fact, in Section \ref{sec:counting} we prove the following corollary of Theorem \ref{thm:main}.

\begin{cor}\label{cor:motivic-not-asympt}
The $\AGL{1}$-representation variety of torus knots is polynomial motivic but not asymptotically polynomial count.
\end{cor}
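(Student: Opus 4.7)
The plan is to read both properties directly off the $\AGL{1}$-formula of Theorem \ref{thm:main}, specialised first to $\bk = \CC$ and then to $\bk = \FF_q$. Substituting $\xi^{\CC}_l = l$ (all $l$-th roots of unity exist in $\CC$) immediately gives $[\Rep{m,n}(\AGLC{1})] = ((m-1)(n-1)+1)(q^2-q)$, which is a polynomial in the Lefschetz motive $q$. Hence the variety is polynomial motivic, with motivic polynomial $P(t) = ((m-1)(n-1)+1)(t^2-t)$.

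For the failure of asymptotic polynomial count I would specialise to $\bk = \FF_q$, where $\xi^{\FF_q}_l = \gcd(l, q-1)$. Writing $a(q) = \gcd(m, q-1)$ and $b(q) = \gcd(n, q-1)$, the coprimality of $m$ and $n$ forces $\gcd(mn, q-1) = a(q)\,b(q)$, so Theorem \ref{thm:main} becomes
\[
|\Rep{m,n}(\AGL{1})(\FF_q)| = C(q)(q^2-q), \qquad C(q) := (a(q)-1)(b(q)-1)+1.
\]
Since $C$ takes values in the finite set $\{1, 2, \ldots, (m-1)(n-1)+1\}$, asymptotic polynomial count would force $C(q)$ to be eventually constant, so it suffices to exhibit two arithmetic progressions of prime powers on which $C$ takes different values.

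This is where Dirichlet's theorem on primes in arithmetic progressions enters, and constitutes the main (though mild) obstacle. For a nontrivial torus knot one has $m, n \geq 2$ coprime, hence $mn \geq 6$. On primes $q \equiv 1 \pmod{mn}$ one gets $a(q) = m$, $b(q) = n$, so $C(q) = (m-1)(n-1)+1$. On the other hand, for any residue $c \pmod{mn}$ coprime to $mn$ with $c \not\equiv 1 \pmod{mn}$, at least one of $\gcd(m, c-1) < m$ or $\gcd(n, c-1) < n$ must hold, and the elementary monotonicity $(a-1)(b-1) \leq (m-1)(n-1)$ for divisors $a\mid m$, $b\mid n$ (strict unless $a=m$, $b=n$, using $m, n \geq 2$) yields $C(c) < (m-1)(n-1)+1$. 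Concretely the class $c \equiv mn-1 \pmod{mn}$ is always coprime to $mn$ and distinct from $1$. Applying Dirichlet to the residue classes $1$ and $mn-1$ produces infinitely many primes $q$ on which $C(q)$ attains two distinct values, contradicting eventual constancy and hence the asymptotic polynomial count property.
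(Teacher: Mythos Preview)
Your argument is correct and follows the same overall strategy as the paper's own proof (Corollary~\ref{cor:counterexample-asymp-count}): specialise Theorem~\ref{thm:main} to $\bk=\CC$ for the polynomial-motivic claim, and then invoke Dirichlet's theorem on two residue classes modulo $mn$ to exhibit infinitely many primes on which the point count follows different polynomials. The only substantive difference is the choice of the second residue class. The paper takes $q\equiv 2\pmod{mn}$, which gives $\gcd(m,q-1)=\gcd(n,q-1)=1$ and hence $C(q)=1$; you instead take $q\equiv mn-1\pmod{mn}$ and argue via the monotonicity of $(a-1)(b-1)$ in the divisors $a\mid m$, $b\mid n$. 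Your choice is slightly more robust: $mn-1$ is always coprime to $mn$, whereas $2$ is coprime to $mn$ only when both $m,n$ are odd, so your version applies uniformly to all nontrivial torus knots (including the case where one of $m,n$ is even) without further adjustment.
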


As we will discuss in Section \ref{sec:counting}, 
the previous result does not fully prevent Katz theorem to work in this context. A slightly stronger version of this result is proven in \cite{Hausel-Rodriguez-Villegas:2008} that allows us to discard the wrong countings. The idea is the following. Suppose that we manage to find a ring $A$ such that, for any embedding $A \hookrightarrow \FF_q$ into a finite field, the variety $X(\FF_q)$ does admit polynomial count (in the sense that there exists a polynomial $P(t) \in \ZZ[t]$ such that $P(q) = |X(\FF_q)|$). The choice of the initial ring $A$ allows us to get rid of the `bad prime powers' so the `right trend' is precisely those prime powers $q$ that admit an embedding $A \hookrightarrow \FF_q$. For this reason, this property is called \emph{strongly polynomial count}.

In our case, it turns out that taking $A = \ZZ[\xi_n, \xi_m]$, where $\xi_n, \xi_m \in \CC$ are primitive $n$-th and $m$-th roots of unity, we are able to
dismiss all the finite fields lying in a `wrong trend' and to select precisely the prime powers whose counting polynomial agrees with the $E$-polynomial. In this sense, we shall show in Section \ref{sec:counting} the following.

\begin{cor}\label{cor:strongly}
The $\AGL{1}$-representation variety of torus knots is strongly polynomial count.
\end{cor}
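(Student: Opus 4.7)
The plan is to substitute the motivic formula of Theorem \ref{thm:main} into the definition of strong polynomial count, using the base ring $A = \ZZ[\xi_n,\xi_m]$ to force the counts $\xi_l^{\FF_q}$ of $l$-th roots of unity (for $l\in\{n,m,nm\}$) to stabilise at the values $l$ they take over $\CC$. Once this stabilisation is secured, the formula of Theorem \ref{thm:main} collapses to an honest polynomial in $q$ with integer coefficients.

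The first step would be to rewrite $A$: since $\gcd(n,m)=1$, we have $A = \ZZ[\xi_{nm}] \cong \ZZ[x]/\Phi_{nm}(x)$, where $\Phi_{nm}$ is the $nm$-th cyclotomic polynomial. A ring homomorphism $\phi:A\to\FF_q$ is therefore equivalent to the choice of a root of $\Phi_{nm}$ in $\FF_q$; by classical cyclotomic theory in positive characteristic, such a root exists precisely when $p:=\charF(\FF_q)$ does not divide $nm$ and $nm\mid q-1$, since in that range the roots of $\Phi_{nm}$ in $\overline{\FF_p}$ are exactly the primitive $nm$-th roots of unity. For every such $q$, the $l$-th roots of unity in $\FF_q^{\times}$ form the unique cyclic subgroup of order $\gcd(l,q-1)$, so $nm\mid q-1$ immediately gives $\xi_n^{\FF_q}=n$, $\xi_m^{\FF_q}=m$ and $\xi_{nm}^{\FF_q}=nm$. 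Substituting into Theorem \ref{thm:main} would then yield
\[
\bigl|\Rep{m,n}(\AGL{1})(\FF_q)\bigr| = (nm - n - m + 2)(q^2 - q),
\]
which is the desired counting polynomial $P(t)=\bigl((n-1)(m-1)+1\bigr)(t^2-t)\in\ZZ[t]$; note that it coincides with the motive over $\CC$ under the identification $\xi_l^{\CC}=l$, in accordance with Katz theorem.

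The only subtle point I foresee, and the main obstacle, is to argue that the motivic identity of Theorem \ref{thm:main} really comes from a scheme-theoretic model of $\Rep{m,n}(\AGL{1})$ over $\Spec A$ that base-changes correctly under each $\phi:A\to\FF_q$. Concretely, one has to verify that the stratification underlying the proof of Theorem \ref{thm:main} — whose pieces are cut out by conditions involving the $n$-th and $m$-th power maps on $\AGL{1}$ — is already defined over $A$, so that the counts it produces genuinely equal $|\Rep{m,n}(\AGL{1})(\FF_q)|$. Once this compatibility is granted, the corollary reduces to the substitution above.
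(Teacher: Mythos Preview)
Your proposal is correct and follows essentially the same route as the paper: choose $A=\ZZ[\xi_n,\xi_m]$, observe that any ring map $A\to\FF_q$ forces $n,m\mid q-1$ and hence $\xi_n^{\FF_q}=n$, $\xi_m^{\FF_q}=m$, $\xi_{nm}^{\FF_q}=nm$, and substitute into the motivic formula to obtain $P(t)=((n-1)(m-1)+1)(t^2-t)$. Your identification $A=\ZZ[\xi_{nm}]$ and the explicit criterion $nm\mid q-1$ are useful refinements the paper leaves implicit.

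One remark on the concern you flag at the end: the paper sidesteps the spreading-out verification entirely, because the computation in Section~\ref{sec:AGL1} is carried out uniformly over an \emph{arbitrary} field $\bk$ (subject to the characteristic hypothesis), not just over $\CC$. Since point-counting $X\mapsto|X(\FF_q)|$ is a ring homomorphism $\K{\Var{\FF_q}}\to\ZZ$ sending the Lefschetz motive to $q$, the motivic identity of Theorem~\ref{thm:main} applied with $\bk=\FF_q$ already \emph{is} the point count---no separate compatibility check for the stratification over $\Spec A$ is needed. The representation variety itself is visibly defined by integral equations, which provides the $\ZZ$-model.
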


The combination of Corollaries \ref{cor:motivic-not-asympt} and \ref{cor:strongly} provides a counterexample to Conjecture \ref{conj:naive}. For this reason, instead of the na\"ive Conjecture \ref{conj:naive}, it makes sense to pose the following subtler conjecture which, to the best of our knowledge, remains open.

\begin{conjecture}
A complex variety is strongly polynomial count if and only if it is polynomial motivic, and the counting and motivic polynomials agree.
\end{conjecture}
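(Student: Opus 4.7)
The plan is to address the two implications of the conjecture separately, since they have quite different flavors.

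For the forward direction, that polynomial motivic implies strongly polynomial count with the same polynomial, I expect a standard spreading-out argument to suffice. If $[X] = P(q) \in \K{\Var{\CC}}$, then this equality in the Grothendieck ring is witnessed by finitely many cut-and-paste relations involving finitely many auxiliary complex varieties. Choosing a finitely generated subring $A \subset \CC$ over which $X$ and all those auxiliary varieties are defined, and enlarging $A$ if needed so that the relations already hold integrally, the same identity will hold in $\K{\Var{A}}$. For every homomorphism $A \to \FF_q$, base change yields the identity in $\K{\Var{\FF_q}}$, and the point-counting ring homomorphism $\K{\Var{\FF_q}} \to \ZZ$ (sending the Lefschetz class to $q$) produces $|X(\FF_q)| = P(q)$, establishing strong polynomial count with the same $P$.

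The reverse direction is where the substance of the conjecture lies, and it appears to be genuinely open. The natural starting point is Katz's theorem itself: strong polynomial count implies that the $E$-polynomial of $X$ equals $P(uv)$, so in particular the mixed Hodge structure on the compactly supported cohomology of $X$ is of Hodge-Tate type. One would then like to promote this arithmetic-cohomological rigidity to a statement at the level of the Grothendieck ring. My plan would be to filter $X$ by locally closed strata whose $\ell$-adic cohomology is pure of a single Tate twist, invoke Deligne's weight theory together with the uniform-in-$q$ constraint coming from strong polynomial count to conclude that each stratum is motivically a sum of Lefschetz powers, and iteratively conclude $[X] \in \ZZ[q]$.

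The main obstacle I foresee is that the $E$-polynomial is a highly lossy invariant of the class in $\K{\Var{\CC}}$: stably birational invariants can distinguish classes with identical $E$-polynomial, so Katz's theorem alone does not force $[X]$ to be a polynomial in $q$. One would need an additional geometric input, such as a stratification into pieces each of which is piecewise isomorphic to an affine space, to conclude polynomiality at the motivic level. For the representation varieties studied in the present paper such stratifications are available explicitly through the computations behind Theorem~\ref{thm:main}, but producing them in the general setting of the conjecture, where only arithmetic hypotheses are available on $X$, seems to require genuinely new ideas beyond what Katz's theorem and classical weight theory provide.
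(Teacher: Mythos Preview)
The statement you are addressing is a \emph{conjecture}, not a theorem: the paper explicitly says (both in the introduction and in Section~\ref{sec:counting}) that ``to the best of our knowledge, [it] remains open,'' and accordingly offers no proof whatsoever. There is therefore nothing in the paper to compare your proposal against, and no proof was expected of you.

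That said, your discussion is sensible and your assessment of the difficulty is aligned with the paper's. You correctly single out the implication ``strongly polynomial count $\Rightarrow$ polynomial motivic'' as the substantive open part, and the obstacle you name --- that the $E$-polynomial (hence Katz's theorem) is far too coarse an invariant of the class in $\K{\Var{\CC}}$ to force membership in $\ZZ[q]$ --- is exactly the right one. Your sketch for the other implication (polynomial motivic $\Rightarrow$ strongly polynomial count) via spreading out the finitely many scissor relations witnessing $[X]=P(q)$ is the standard heuristic and is widely believed to work, though making it fully rigorous requires some care: one must check that the closed immersions and complements defining each relation remain such after base change, and that isomorphisms of varieties over $\CC$ descend to isomorphisms over a suitable $A$ (possibly after further localization). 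None of this is addressed in the paper, which simply poses the conjecture; your write-up is thus more of a commentary on its plausibility than a proof attempt, and that is the appropriate stance here.
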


\noindent \textbf{Acknowledgements.} The authors thank Marton Hablicsek and Jesse Vogel for very useful conversations regarding the Grothendieck ring of algebraic varieties. The first author was partially supported by MICINN (Spain) grant PID2019-106493RB-I00.
The second author was partially supported by MINECO (Spain) grant PGC2018-095448-B-I00.

\section{Basic notions} 

\subsection{Representation varieties of torus knots}\label{sec:rep-var}

Let $\bk$ be any field, which in general is not assumed to be algebraically closed. 
Let $\G$ be a finitely presented group, and let $G$ be an algebraic group over $\bk$. 
A \textit{representation} of $\G$ in $G$ is a homomorphism $\rho: \G\to G$.
Consider a presentation $\G=\langle x_1,\ldots, x_k \,|\, r_1,\ldots, r_s \rangle$. Then $\rho$ is completely
determined by the $k$-tuple $(A_1,\ldots, A_k)=(\rho(x_1),\ldots, \rho(x_k))$
subject to the relations $r_j(A_1,\ldots, A_k)=\Id$, $1\leq j \leq s$. The variety 
of representations is
 \begin{eqnarray*}
 \Rep{\G}(G) &=& \Hom(\G, G) \\
  &=& \{(A_1,\ldots, A_k) \in G^k \, | \,
 r_j(A_1,\ldots, A_k)=\Id, \,  1\leq j \leq s \}\subset G^{k}\, .
 \end{eqnarray*}
Written in this way, $\Rep{\G}(G)$ is an affine algebraic set and it can be checked that this algebraic structure does not depend on the chosen presentation.

If $G< \GL{d}$, then a representation $\rho$ gives an endomorphism of the $\bk$-vector space 
$\bk^d$, so it makes sense to talk about the reducibility of $\rho$. A representation $\rho$ is \textit{reducible} if there exists some proper 
subspace $V\subset \bk^d$ such that for  all $g\in G$ we have 
$\rho(g)(V)\subset V$;  otherwise $\rho$ is
\textit{irreducible}. 

We will be interested in representations of the fundamental group of the complement of a torus knot. To be precise, let $T^2=S^1 \times S^1$ be the $2$-torus and consider the standard embedding
$T^2\subset S^3$. Let $m,n$ be a pair of coprime positive integers. Identifying
$T^2$ with the quotient $\RR^2/\ZZ^2$, the image of the straight line $y=\frac{m}{n}
x$ in $T^2$ defines the \textit{torus knot} of type $(m,n)$, which we shall denote
as $K_{m,n}\subset S^3$ (see \cite[Chapter 3]{Ro}).
Now, if we take $\Gamma_{m,n} = \pi_1(S^3-K_{m,n})$, it is well-known that a presentation of this group is given by
 $$
  \G_{m,n} = \langle x,y \, | \, x^n= y^m \,\rangle \,.
 $$
Therefore, we can consider the variety of representations of the torus knot of type $(m,n)$, which is explicitly described as
 $$
  \Rep{m,n}(G)=\Rep{\G_{m,n}}(G)=\{ (A,B) \in G^2 \,|\, A^n=B^m\}. 
  $$

\subsection{The Grothendieck ring of algebraic varieties}

Let us consider the category $\Var{\bk}$ of algebraic varieties with regular morphisms 
over a base field $\bk$. The isomorphism classes of algebraic varieties (i.e.\ of objects of $\Var{\bk}$) form a semi-ring with the disjoint union as addition and the cartesian product as multiplication. This semi-ring can be promoted to a ring, the so called {\em Grothendieck group of algebraic varieties} $\K{\Var{\bk}}$, by adding formal additive inverses. Explicitly, it is the abelian group generated by isomorphism classes of algebraic varieties with the relation that $[X]=[Y]+[U]$ if $X=Y\sqcup U$, with $Y\subset X$ a closed subvariety. The image of an algebraic variety $X$ in $\K{\Var{\bk}}$, $[X]$, is usually referred to as the {\em virtual class of $X$} or its {\em motive}. A very important element of this ring is the class of the affine line, $q = [\bk] \in \K{\Var{\bk}}$, the so-called \emph{Lefschetz motive}. 

\begin{rmk} \label{rem:zerodivisor}
Despite the simplicity of its definition, the ring structure of $\K{\Var{\bk}}$ is widely unknown. In particular, for almost fifty years it was an open problem whether it is an integral domain. The answer is no and, more strikingly, the Lefschetz motive $q$ is a zero divisor \cite{Borisov:2014}.
\end{rmk}

Virtual classes are well-behaved with respect to two typical geometric situations that we will encounter in the upcoming sections. A proof of the
following facts can be found for instance in \cite[Section 4.1]{GP:2018b}{}. Let $\bk$ be a field of characteristic $\charF(\bk)\neq 2$. 
\begin{itemize}
	\item Let $E \to B$ be a regular morphism that is a locally trivial bundle in the Zariski topology with fiber $F$. In this situation, we have
	that in $\K\Var{\bk}$
	$$
		[E]=[F]\cdot [B].
	$$
	\item Suppose that $X$ is an algebraic variety with an action of $\ZZ_2$.
Setting $[X]^+ = [X / \ZZ_2]$ and $[X]^- = [X] - [X]^+$, we have the formula
  \begin{equation}\label{eqn:2bis}
 [X \times Y]^{+} = [X]^+[Y]^+ + [X]^-[Y]^- 
  \end{equation}
for two varieties $X,Y$ with $\ZZ_2$-actions. 
\end{itemize}

\begin{ex}\label{ex:calculations-kvar}
Consider the fibration $\bk^2 - \bk \to \GL{2} \to \bk^{2}-\{(0,0)\}$, $f \mapsto f(1,0)$. It is locally trivial in the Zariski topology,
and therefore $[\GL{2}]=[\bk^2-\bk]\cdot[\bk^{2}-\{(0,0)\}] = (q^2-q)(q^2-1) = q^4-q^3-q^2+q$. Analogously, the quotient map defines a
locally trivial fibration $\bk^* = \bk - \{0\} \to \GL{2} \to \PGL{2}$, so $[\PGL{2}] = q^3-q$.
\end{ex}

The following is proved in \cite[Lemma 2.2]{proceedings-Rassias} for $\bk=\CC$, but the proof extends to any field verbatim.

\begin{lem}\label{lem;ac}
Let $\ZZ_2$ act on $\bk^2$ by exchange of coordinates. Then 
$[(\bk^*)^2 - \Delta]^+ = (q-1)^2$, $[(\bk^*)^2 - \Delta]^- = -q+1$, where $\Delta$ denotes the diagonal.

Also let $X=\GL{2} / \GL{1} \times \GL{1}$, and $\ZZ_2$ acting by exchange of columns in $\GL{2}$.
Then $[X]^+=q^2$ and $[X]^-=q$.
\end{lem}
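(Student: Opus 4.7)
The plan is to treat the two parts of the statement separately, in each case identifying the $\ZZ_2$-quotient with an explicit variety and then applying the scissor relation together with the computations of Example~\ref{ex:calculations-kvar}.

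For the first identity, I would start from the motivic identity $[(\bk^*)^2-\Delta] = (q-1)^2 - (q-1) = (q-1)(q-2)$. The quotient of $(\bk^*)^2-\Delta$ by the coordinate exchange parameterizes unordered pairs $\{a,b\}$ of distinct nonzero elements of $\bk$, and via the symmetric functions $s=a+b$ and $p=ab$ this quotient is isomorphic, as a $\bk$-scheme, to the open subvariety $\{(s,p)\in\bk\times\bk^{*}:s^{2}-4p\neq 0\}\subset \bk^2$. Indeed, the ring of invariants is $\bk[a,b]^{\ZZ_2}=\bk[s,p]$; the condition $p\neq 0$ ensures both roots are nonzero, while $(a-b)^{2}=s^{2}-4p\neq 0$, using $\charF(\bk)\neq 2$, ensures they are distinct. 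Removing the parabola $p=s^{2}/4$ with $s\neq 0$ from $\bk\times\bk^{*}$ yields motive $q(q-1)-(q-1)=(q-1)^2$, giving $[(\bk^*)^2-\Delta]^+=(q-1)^2$ and, by subtraction, $[(\bk^*)^2-\Delta]^-=-(q-1)$.

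For the second identity, the right action of the diagonal subgroup $\GL{1}\times\GL{1}$ on $\GL{2}$ independently rescales the two columns, so $X=\GL{2}/(\GL{1}\times\GL{1})$ is the variety of ordered pairs of distinct lines in $\bk^2$, namely $X\cong (\PP^1)^2-\Delta$. This gives $[X]=(q+1)^2-(q+1)=q^2+q$, which can be cross-checked against $[\GL{2}]/[\GL{1}\times\GL{1}]$ using Example~\ref{ex:calculations-kvar}. The $\ZZ_2$-action by swapping columns descends to the swap of the two lines, so $X/\ZZ_2=\mathrm{Sym}^2(\PP^1)-\Delta$. I would then invoke the classical isomorphism $\mathrm{Sym}^2(\PP^1)\cong\PP^2$ sending an unordered pair $\{[a_0:a_1],[b_0:b_1]\}$ to the projective class of the quadratic form with those roots; the diagonal goes to the discriminant conic, which has the obvious $\bk$-rational point $[1:0:0]$ and is smooth (using $\charF(\bk)\neq 2$), hence isomorphic to $\PP^1$ by projection from that point. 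Therefore $[X]^+=[\PP^2]-[\PP^1]=q^2$ and $[X]^-=(q^2+q)-q^2=q$.

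The main subtlety, rather than the arithmetic, will be justifying that these $\ZZ_2$-quotients are realized by the claimed varieties over an arbitrary (possibly non-algebraically-closed) field of characteristic $\neq 2$; one must argue at the level of $\bk$-schemes (via rings of invariants, or the $\mathrm{Proj}$ of invariant sections) rather than $\bk$-points, since a typical $\bk$-point of, say, $\PP^2$ minus the discriminant conic need not lift to $(\PP^1)^2-\Delta$. Once the scheme-theoretic identifications $((\bk^*)^2-\Delta)/\ZZ_2\cong\{p\neq 0,\,s^{2}-4p\neq 0\}$ and $((\PP^1)^2-\Delta)/\ZZ_2\cong\PP^2-\{\mathrm{conic}\}$ are granted, every remaining step is a routine application of the scissor relation together with the formulas for $[\GL{2}]$, $[\PP^n]$ and a smooth conic with a rational point in $\K{\Var{\bk}}$.
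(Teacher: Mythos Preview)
Your argument is correct. The identifications of the two $\ZZ_2$-quotients via symmetric functions and via $\mathrm{Sym}^2(\PP^1)\cong\PP^2$ are the standard ones, and your care about working scheme-theoretically over a non-closed field (and invoking $\charF(\bk)\neq 2$ precisely where it is needed, for $s^2-4p$ to detect coincident roots and for the discriminant conic to be smooth) addresses exactly the right subtlety.

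As for comparison: the paper does not supply its own proof of this lemma. It simply cites \cite[Lemma~2.2]{proceedings-Rassias} for the case $\bk=\CC$ and asserts that the argument extends verbatim to an arbitrary field. Your write-up therefore fills in what the paper omits; the approach you take (elementary symmetric functions for the first part, the Veronese-type identification $\mathrm{Sym}^2(\PP^1)\cong\PP^2$ for the second) is the expected one and almost certainly coincides with the argument in the cited reference.
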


There is a subtle point in the definition of $\K{\Var{\bk}}$ when dealing with non-algebraically closed fields $\bk$ due to the failure of the Nullstellensatz. Recall that, with the modern definition, an affine variety is a scheme of the form $X= \Spec(R)$ with $R$ a reduced $\bk$-algebra of finite type which is also an integral domain, and the closed points of $X$ are precisely the maximal ideals of $R$. If we write $R = \bk[x_1, \ldots, x_n]/I$ with $I$ an ideal, when $\bk$ is algebraically closed then these maximal ideals of $R$ are in one-to-one correspondence with the common zeros in $\bk^n$ of the elements of $I$. This recovers the classical definition of an affine variety as the zero set of polynomials. However, if $\bk$ is not algebraically closed then this correspondence is no longer bijective. For instance, the scheme $X = \Spec\left(\RR[x]/(x^2 + 1)\right)$ defines a non-trivial affine variety, but the zero set of $I = (x^2+1)$ in $\RR$ is empty.

This opens the door to two different definitions for an affine variety over $\bk$: as the spectrum of a finitely generated integral domain over $\bk$ (the scheme-theoretic one) or as the zero set of an polynomial ideal (the classical one). Furthermore, we can define an (abstract) algebraic variety as a separated integral scheme of finite type over $\bk$ or as a locally ringed space locally isomorphic to a classical affine variety. Let us denote by $\Var{\bk}^{sc}$ the category of the former and by $\Var{k}^{cl}$ that of the later. Their associated Grothendieck rings are related by a natural map
$$
	\K{\Var{\bk}^{sc}} \to \K{\Var{\bk}^{cl}},
$$
given by taking the `underlying zero set' of the scheme. When $\bk$ is not algebraically closed, this map may not be a monomorphism and its kernel is the collection of scheme-theoretic algebraic varieties with empty zero-set trace.

Throughout this paper, we shall work with the classical definition, which is better suited for geometric arguments. In this way, when we write $\Var{\bk}$ we will always refer to  $\Var{\bk}^{cl}$.

\subsection{Hodge structures}\label{sec:Hodge}

In the complex case, $\bk = \CC$, our algebraic varieties are naturally endowed with an extra structure, the so-called Hodge structure.

Recall that a pure Hodge structure of weight $k$ consists of a finite dimensional complex vector space
$H$ with a real structure, and a decomposition $H=\bigoplus_{k=p+q} H^{p,q}$
such that $H^{q,p}=\overline{H^{p,q}}$, the bar meaning complex conjugation on $H$.
A pure Hodge structure of weight $k$ gives rise to the so-called Hodge filtration, which is a descending filtration $F^{p}=\bigoplus_{s\ge p}H^{s,k-s}$. We define $\Gr^{p}_{F}(H):=F^{p}/ F^{p+1}=H^{p,k-p}$.
 More generally, a mixed Hodge structure consists of a finite dimensional complex vector space $H$ with a real structure,
an ascending (weight) filtration $W_\bullet$ of $H$ defined over $\RR$, and a descending (Hodge) filtration $F^\bullet$ such that $F^\bullet$ induces a pure Hodge structure of weight $k$ on each graded piece $\Gr^{W}_{k}(H)=W_{k}/W_{k-1}$. Set $H^{p,q} = \Gr^{p}_{F}\Gr^{W}_{p+q}(H)$ and write $h^{p,q}$ for the dimension $h^{p,q} :=\dim H^{p,q}$, usually referred to as the {\em Hodge numbers}.

Now, let $X$ be any quasi-projective complex algebraic variety (possibly non-smooth or non-compact). 
The real cohomology groups $H^k(X)$ and the cohomology groups with compact support  
$H^k_c(X)$ are endowed with mixed Hodge structures \cite{De}{}, and the complex algebraic maps preserve them. 
We define the {\em Hodge numbers} of $X$ by
$h^{k,p,q}_{c}(X)= h^{p,q}(H_{c}^k(X))=\dim \Gr^{p}_{F}\Gr^{W}_{p+q}H^{k}_{c}(X)$.

\begin{defn}
Given a complex quasi-projective algebraic variety, the {\em $E$-polynomial}, also known as the Hodge-Deligne polynomial, is defined as 
 $$
 e(X)=e(X)(u,v):=\sum _{p,q,k} (-1)^{k}h^{k,p,q}_{c}(X)\, u^{p}v^{q} \in \ZZ[u,v].
 $$
\end{defn}

\begin{rmk}
When $h_c^{k,p,q}(X)=0$ for $p\neq q$, the polynomial $e(X)$ depends only on the product $uv$.
This will happen in all the cases that we shall investigate here. In this situation, it is
conventional to use the variable $q=uv$. For instance, $e(\CC^n)=q^n$.
\end{rmk}

A key property of $E$-polynomials that permits their calculation is that they are additive for
stratifications of $X$. If $X$ is a complex algebraic variety and we decompose it into $X=\bigsqcup_{i=1}^{n}X_{i}$, where all the $X_i$ are locally closed in $X$, then 
 $$
 e(X)=\sum_{i=1}^{n}e(X_{i}).
 $$
The $E$-polynomial is also a multiplicative mapping. Indeed, the K\"unneth isomorphism shows that $e(X \times Y) = e(X)e(Y)$. Additionally, this multiplicative property can be extended to more general scenarios, such as algebraic fibrations that are locally trivial in the Zariski topology, or principal $G$-bundle with $G$ a connected algebraic group \cite[Remark 2.5]{LMN}.

Due to these additivity and multiplicativity properties, the $E$-polynomial defines a ring homomorphism
$$
	e: \K{\Var{\CC}} \to \ZZ[u^{\pm 1}, v^{\pm 1}].
$$
This homomorphism factorizes through mixed Hodge structures. To be precise, the category of mixed Hodge structures ${\MHS{\QQ}}$ is an abelian category \cite{De}{}. Therefore we may as well consider its Grothendieck group, $\K{\MHS{\QQ}}$, which again inherits a ring structure. The long exact sequence in cohomology with compact support and the K\"unneth isomorphism shows that there exist ring homomorphisms $\K{\Var{\CC}} \to \K{\MHS{\QQ}}$ given by $[X] \mapsto \sum_k (-1)^k[H_c^k(X)]$ and $\K{\MHS{\QQ}} \to \ZZ[u^{\pm 1}, v^{\pm 1}]$ given by $[H] \mapsto \sum h^{p,q}(H) u^{p}v^{q}$ such that the following diagram commutes
	\[
\begin{displaystyle}
   \xymatrix
   {	\K{\Var{\CC}} \ar[rd]_{e} \ar[r] & \K{\MHS{\QQ}} \ar[d] \\
   & \ZZ[u^{\pm 1}, v^{\pm 1}]
      }
\end{displaystyle}   
\]

\begin{rmk}\label{remark:$E$-pol-lefschetz}
From the previous diagram, we get that the $E$-polynomial of the affine line is $q=e([\CC])$ which justifies denoting by $q=[\CC]\in\K{\Var{\CC}}$ the Lefschetz motive. This implies that if the motive of a variety lies in the subring of $\K{\Var{\CC}}$ generated by the affine line, then the $E$-polynomial of the variety coincides with the motive, by seeing $q$ as a variable. 
\end{rmk}

We can also consider the equivariant version of the $E$-polynomial. Let $X$ be a complex quasi-projective variety on which a finite group $F$ acts. 
Then $F$ also acts of the cohomology $H^k_c(X)$ respecting the mixed Hodge structure, so
$[H^k_c(X)]$ can be seen as an element of the representation ring $R(F)$ of $F$. The \emph{equivariant 
Hodge-Deligne polynomial} is defined as
 $$
 e_F(X)=\sum_{p,q,k}  (-1)^k [H^{k,p,q}_c(X)] \, u^pv^q \in R(F)[u,v].
 $$

For instance, for an action of $\ZZ_2$, there are two irreducible
representations $T,N: \ZZ_2 \to \CC^*$, where $T$ is the trivial representation, and $N$ is the non-trivial representation.
Then, the equivariant $E$-polynomial can be written as $e_{\ZZ_2}(X)=aT+bN$, where $e(X) = a+b$, $e(X/\ZZ_2) = a$, thus $b=e(X)-e(X/\ZZ_2)$.
In the notation of \cite[Section 2]{LMN}, $a=e(X)^+$, $b=e(X)^-$. 
Note that if $X,X'$ are complex varieties with $\ZZ_2$-actions, then 
writing $e_{\ZZ_2}(X)=aT+bN$ and $e_{\ZZ_2}(X')=a'T+b'N$, we have
$e_{\ZZ_2}(X\x X')= (aa'+bb') T+ (ab'+ba')N$ and so
  \begin{equation}\label{eqn:2}
  e((X\x X')/\ZZ_2)= e(X)^+e(X')^++ e(X)^-e(X')^-.
 \end{equation}
This is the Hodge-theoretic analogue of (\ref{eqn:2bis}).

%
%

%

\section{The problem of polynomial counting} \label{sec:counting}

In the seminal paper \cite{Hausel-Rodriguez-Villegas:2008}, Hausel and Rodr\'iguez-Villegas introduced an arithmetic method for computing the $E$-polynomial of representation varieties. The key idea was to count the number of points of a complex variety over finite fields, as suggested by the Weil conjectures.

\begin{defn}
Let $X$ be a complex algebraic variety. A \emph{spreading out} of $X$ is a pair $(A, \cX)$ of a finitely generated ring $A \subset \CC$ and a separated $A$-scheme $\cX$ such that $X$ and $\cX(\CC) = \cX \times_{\textrm{Spec}(A)} \textrm{Spec}(\CC)$ are isomorphic as complex algebraic varieties.
\end{defn}

Suppose that $A$ is also a finitely generated $\bk$-algebra. For any morphism $A \to A'$ to a finitely generated $\bk$-algebra, we will denote by $\cX(A')$ the set of closed points of $\cX \times_{\textrm{Spec}(A)} \textrm{Spec}(A')$ after the extension of scalars. In particular, if $A'=\FF_q$ is a finite field, then $\cX(\FF_q)$ is a finite set of cardinal $|\cX(\FF_q)|$.

\begin{defn}
Let $X$ be a complex algebraic variety. We say that $X$ is \emph{asymptotically polynomial count} if there exist a spreading out $(\ZZ, \cX)$ of $X$ and a polynomial $P(t) \in \ZZ[t]$ such that $P(p^n) = \left|\cX(\FF_{p^n})\right|$ for all but finitely many prime $p$ and all $n \geq 1$.
\end{defn}

A related notion is the following, where the key difference is that, now, the ground ring $A$ of the spreading out may be any finitely generated subring of $\CC$.

\begin{defn}
A complex variety $X$ is said to be \emph{strongly polynomial count} if there exist a spreading out $(A, \cX)$ of $X$ and a polynomial $P(t) \in \ZZ[t]$ such that, for any monomorphism $A \hookrightarrow \FF_q$ we have that $P(q) = \left|\cX(\FF_q)\right|$. 
\end{defn}

\begin{rmk}\label{rmk:asymp-strong}
An asymptotically polynomial count complex variety is strongly polynomial count. Indeed, suppose that we have a spreading out $(\ZZ, \cX)$ such that $P(t)$ counts the number of points of points in $\cX(\FF_q)$ for $q > N_0$. Let $p > N_0$ be a prime number and take $A = \ZZ[\xi_p]$ where $\xi_p$ is a $p$-th root of unit in $\CC$. Then any monomorphism $A \hookrightarrow \FF_q$ must send $\xi_p$ to an non-vanishing element of order $p$ which forces that $q \geq p$. Therefore, $P(q) = |\cX(\FF_q)|$ and thus $X$ is strongly polynomial count.
\end{rmk}

The very remarkable feature of these varieties is that this counting process actually captures their Hodge structure.

\begin{thm}[{\cite[Theorem 6.1.2]{Hausel-Rodriguez-Villegas:2008}}]\label{thm:hausel-counting}
If $X$ is a complex variety that is strongly polynomial count with counting polynomial $P$, then the $E$-polynomial of $X$ is
$$
	e(X)(u,v) = P(uv).
$$
\end{thm}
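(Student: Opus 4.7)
The plan is to deduce the theorem from the Grothendieck--Lefschetz trace formula together with Deligne's theory of weights in $\ell$-adic cohomology, reducing the coincidence of the counting polynomial and the $E$-polynomial to the assertion that polynomial count over finite fields forces the mixed Hodge structure on $H^\bullet_c(X,\QQ)$ to be of Hodge--Tate type.

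First I would fix a spreading out $(A,\cX)$ witnessing the strong polynomial count property and choose an auxiliary prime $\ell$. Because $A$ is a finitely generated $\ZZ$-algebra, there is an infinite set $\cS$ of prime powers $q$ admitting a monomorphism $A\hookrightarrow \FF_q$, and for each such $q$ the Grothendieck--Lefschetz trace formula applied to the geometric Frobenius on $\cX_{\overline{\FF}_q}$ reads
\begin{align*}
P(q) \;=\; |\cX(\FF_q)| \;=\; \sum_{i}(-1)^i\,\mathrm{Tr}\bigl(\mathrm{Frob}_q \,\big|\, H^i_c(\cX_{\overline{\FF}_q},\QQ_\ell)\bigr).
\end{align*}
By Deligne's theorem (Weil II), the eigenvalues of Frobenius on the weight-$k$ graded piece of $H^i_c(\cX_{\overline{\FF}_q},\QQ_\ell)$ are algebraic numbers whose Galois conjugates all have complex absolute value $q^{k/2}$.

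The key step, and the principal technical obstacle, is a rigidity argument due to Katz: since $P(q)$ is a \emph{fixed} polynomial that agrees with the Frobenius trace for an infinite set of $q$'s, the Weil bound $|\lambda|=q^{k/2}$ combined with the Galois invariance of the multiset of eigenvalues forces all eigenvalues to be of the form $q^{p}$ with $p\in\NN$, and their multiplicities $n_{i,p}$ to be independent of the particular $q \in \cS$. Heuristically, eigenvalues of the shape $\alpha\,q^{k/2}$ with $k$ odd, or with $\alpha$ not a root of unity, cannot conspire on an infinite set of prime powers to reproduce a \emph{single} polynomial in $q$ with integer coefficients; extracting this rigorously from Weil II is the substantive part of the proof.

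Finally I would invoke the comparison isomorphism between $\ell$-adic and singular cohomology, together with the compatibility between Deligne's weight filtration on $\ell$-adic cohomology (coming from the Frobenius action) and Deligne's weight filtration on the mixed Hodge structure of $X=\cX(\CC)$. This identifies $n_{i,p}$ with the Hodge number $h^{i,p,p}_c(X)$ and forces $h^{i,p,q}_c(X)=0$ whenever $p\neq q$, so that $X$ is of Hodge--Tate type. Assembling everything,
\begin{align*}
e(X)(u,v) \;=\; \sum_{i,p}(-1)^i h^{i,p,p}_c(X)\,(uv)^p \;=\; \sum_{i,p}(-1)^i n_{i,p}\,(uv)^p \;=\; P(uv),
\end{align*}
which is the desired identity.
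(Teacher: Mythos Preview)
The paper does not prove this theorem; it is quoted from Katz's appendix to \cite{Hausel-Rodriguez-Villegas:2008} and invoked as a black box, so there is no argument in the paper to compare your sketch against.

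Your outline does capture the right ingredients (Grothendieck--Lefschetz, Weil II, the comparison isomorphism), but it has a gap in the last step. The comparison between $\ell$-adic and singular cohomology respects the \emph{weight} filtration, so from the Frobenius data you recover the virtual weight polynomial $e(X)(s,s)=\sum_{i,w}(-1)^i\dim\Gr^W_w H^i_c(X)\,s^w$ and hence $e(X)(s,s)=P(s^2)$. But this alone does not yield the two-variable identity $e(X)(u,v)=P(uv)$: a Frobenius eigenvalue equal to $q^p$ only pins down the weight $2p$, and $\Gr^W_{2p}H^i_c$ decomposes as $\bigoplus_{a+b=2p}H^{a,b}$, not merely $H^{p,p}$. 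Your identification $n_{i,p}=h^{i,p,p}_c$ and the vanishing $h^{i,a,b}_c=0$ for $a\neq b$ therefore do not follow from weight comparison; separating the Hodge bigrading within each weight-graded piece is precisely the point that needs further input. There is also a smaller imprecision in your step 3: the zeta-function argument only shows that the \emph{virtual} Frobenius eigenvalues, after cancellation between even and odd cohomological degrees, are integer powers of $q$; individual $H^i_c$ can carry other eigenvalues that cancel in the alternating sum, so ``all eigenvalues are of the form $q^p$'' is too strong as stated.
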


\begin{ex}
The affine space $\CC^n$ is polynomial counting. It admits a universal spreading out $(\ZZ, \ZZ^n)$ and the polynomial $P(t)=t^n$ is its counting polynomial since $|\ZZ^n(\FF_q)| = |\FF_q^n| = q^n$. This is in perfect agreement with the fact that its $E$-polynomial must be $e(\CC^n)(u,v) = u^nv^n = q^n$, as it is provided by the relation $1 + q + q^2 + \ldots + q^n = e(\PP_\CC^n) = e(\CC^n) + e(\PP_\CC^{n-1})$.

On the other hand, a Riemann surface $\Sigma$ of genus $g \geq 1$ is not polynomial count. Recall that its (pure) Hodge structure is $H^{0,0}(\Sigma) = \CC$, $H^{1,0}(\Sigma) = H^{0,1}(E) = \CC^g$ and $H^{1,1}(\Sigma) = \CC$. Hence, its $E$-polynomial is given by $e(\Sigma)(u,v) = 1 - gu-gv + uv$. Notice that $e(\Sigma)(u,v)$ is not even a polynomial in the variable $uv$, so $\Sigma$ cannot be polynomial count.
\end{ex}

A related property to polynomial count can be read in the Grothendieck ring of algebraic varieties, $\K{\Var{\CC}}$.

\begin{defn}
A complex algebraic variety $X$ will be said to be \emph{polynomial motivic} if its motive $[X] \in \K{\Var{\CC}}$ can be expressed as $[X] = P(q)$ for some polynomial $P \in \ZZ[t]$, that we will refer to as the \emph{motivic polynomial}. In other words, the motive $[X]$ belongs to the subring of $\K{\Var{\CC}}$ generated by the Lefschetz motive $q$. 
\end{defn}

The interplay between polynomial count and polynomial motivic varieties is widely presented in the literature about representation varieties. 
In \cite{Hausel-Rodriguez-Villegas:2008}, the authors showed that the twisted $\GLC{r}$-representation varieties of Riemann surfaces are polynomial count and computed their counting polynomials. This work on polynomial count has been extended in \cite{mereb} for $\SLC{r}$ and in \cite{Hausel-Letelier-Villegas} for parabolic representation varieties. At the other side, in the paper \cite{MM} it was shown that the $\SLC{2}$-representation varieties of Riemann surfaces are also polynomial motivic and the motivic polynomial agrees with the counting polynomial. Additionally, the recent developments on Topological Quantum Field Theories has opened the door to show that general representation varieties are also polynomial motivic and to compute their motives \cite{GPLM-2017,GP:2018a}.

Based on these results, and the similarity between these concepts, the following conjecture has been considered in one or another form in the aforementioned works.

\begin{conjecture}\label{conj:motivic}
Let $\cX$ be a reduced $\ZZ$-scheme and let $X = \cX(\CC)$ be the associated complex algebraic variety by extension of scalars. Then $X$ is asymptotically polynomial count if and only if it is polynomial motivic, and the counting and motivic polynomials agree.
\end{conjecture}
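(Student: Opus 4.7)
The conjecture is an equivalence between two polynomiality conditions, so I would tackle the two implications separately, using the $E$-polynomial ring homomorphism $e : \K{\Var{\CC}} \to \ZZ[u^{\pm 1}, v^{\pm 1}]$ together with Katz's Theorem \ref{thm:hausel-counting} as the principal bridge between point counting and motivic geometry.

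For the forward direction (polynomial motivic $\Rightarrow$ asymptotically polynomial count): assume $[X] = P(q)$ in $\K{\Var{\CC}}$. This equality is witnessed by finitely many scissor relations, i.e., by locally closed decompositions and isomorphisms between complex algebraic varieties. Standard spreading-out arguments guarantee that all of these strata and isomorphisms can be realized simultaneously over some finitely generated subring $A \subset \CC$. If everything can be arranged already over $\ZZ$ (after possibly inverting finitely many primes), then reducing modulo any sufficiently large prime $p$ yields compatible decompositions of $\cX_{\FF_{p^n}}$, and multiplying through by the point counts of the strata gives $|\cX(\FF_{p^n})| = P(p^n)$.

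For the reverse direction (asymptotically polynomial count $\Rightarrow$ polynomial motivic): Remark \ref{rmk:asymp-strong} and Theorem \ref{thm:hausel-counting} together give $e(X)(u,v) = P(uv)$, so the $E$-polynomial lies in the subring $\ZZ[q] \subset \ZZ[u,v]$. The goal is then to lift this equality from the $E$-polynomial ring back to $\K{\Var{\CC}}$, that is, to conclude $[X] - P(q) = 0$ in the Grothendieck ring. Here the obstacle is fundamental: the homomorphism $e$ is very far from injective, and Remark \ref{rem:zerodivisor} warns us that even $q$ itself is a zero divisor, so a priori many distinct classes in $\K{\Var{\CC}}$ can share the same $E$-polynomial. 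I would try to refine the argument by keeping track of the full class in $\K{\MHS{\QQ}}$ via Frobenius weights, in the spirit of the proof of Theorem \ref{thm:hausel-counting}.

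The principal obstacle, however, lies in the forward direction and is inherently arithmetic. When the motivic stratification witnessing $[X] = P(q)$ involves subvarieties defined over an extension $A = \ZZ[\xi_\ell]$ rather than $\ZZ$ itself (for instance, strata cut out by $\ell$-th roots of unity), the geometry over $\CC$ remains polynomial in $q$, but reducing modulo a prime $p$ where $\ell$-th roots of unity are unavailable forces the strata to collapse or recombine in ways that depend on the arithmetic relation between $p$ and $\ell$. The $\AGL{1}$-representation variety of a torus knot, whose motive depends explicitly on the quantities $\xi_{nm}^{\bk}$, $\xi_n^{\bk}$, $\xi_m^{\bk}$, is precisely such a case. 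I therefore expect that the conjecture as stated \emph{cannot} be proved in this form, and that Corollary \ref{cor:motivic-not-asympt} will supply an explicit counterexample; the natural repair is to replace asymptotic with strongly polynomial count, so that the base ring $A$ can absorb the offending roots of unity, which is exactly the refined conjecture posed at the end of the introduction.
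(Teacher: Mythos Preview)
Your final conclusion is correct and matches the paper exactly: the statement is a \emph{conjecture}, not a theorem, and the paper does not attempt to prove it. On the contrary, the paper explicitly disproves it via Corollary~\ref{cor:counterexample-asymp-count} (which is the same as Corollary~\ref{cor:motivic-not-asympt} in the introduction): the $\AGL{1}$-representation variety of an $(m,n)$-torus knot with $m,n>2$ coprime is polynomial motivic but not asymptotically polynomial count, because the point count over $\FF_q$ depends on $\gcd(n,q-1)$ and $\gcd(m,q-1)$ and therefore oscillates between several polynomial trends. You identified precisely this mechanism (the strata cut out by roots of unity force the spreading-out to live over $\ZZ[\xi_n,\xi_m]$ rather than $\ZZ$) and the correct repair (replace ``asymptotically'' by ``strongly'', leading to Conjecture~\ref{conj:motivic-strong}).

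The one structural difference is that the paper never entertains a proof attempt of either implication; it simply states the conjecture and proceeds directly to the counterexample. Your sketches of the two directions are extraneous to what the paper does, though your diagnosis of the obstruction in the forward direction is exactly the content of the counterexample. Note also that your forward-direction sketch, even absent the arithmetic obstruction, would at best yield \emph{strongly} polynomial count (over the ring $A$ witnessing the scissor relations), not asymptotically polynomial count over $\ZZ$; so that argument already points toward the refined conjecture rather than the one stated.
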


We shall disprove this conjecture with our computations of $\AGL{r}$-representation varieties of torus knots. Here, $\AGL{r}$ is the group of affine transformations of the $r$-dimensional affine space. In other words, the elements of $\AGL{r}$ are matrices of the form
$$
\begin{pmatrix}
	1 & 0 \\
	\alpha & A_0
	\end{pmatrix},
$$
with $\alpha \in \bk^r$ and $A_0 \in \GL{r}$. Multiplication in $\AGL{r}$ is given by matrix multiplication, so we have a natural description as semi-direct product
$\AGL{r} = \bk^{r} \rtimes \GL{r}$.

\section{$\AGL{1}$-representation varieties of torus knots}\label{sec:AGL1}

In this section, we compute the motive of the $\AGL{1}$-representation variety of the $(m,n)$-torus knot by describing it explicitly. Fix a field $\bk$ whose characteristic is $\charF(\bk)=0$ or $\charF(\bk) > 0$ not dividing both $n$ and $m$. 
Suppose that we have an element $(A, B) \in \Rep{m,n}(\AGL{1})$ with matrices of the form
$$
	A = \begin{pmatrix}
	1 & 0 \\
	\alpha & a_0
	\end{pmatrix}, \qquad B = \begin{pmatrix}
	1 & 0 \\
	\beta & b_0
	\end{pmatrix}.
$$
A straightforward computation shows that
$$
	A^n = \begin{pmatrix}
	1 & 0 \\
	(1 + a_0 + \ldots +a^{n-1}_0)\alpha & a^n_0
	\end{pmatrix}, \qquad B^m = \begin{pmatrix}
	1 & 0 \\
	(1 + b_0 + \ldots+ b^{m-1}_0)\beta & b^m_0
	\end{pmatrix}.
$$

\begin{lem}\label{lem:iso-cusp}
Let $m,n \geq 1$ be coprime natural numbers. Then the algebraic curve
$$
	C = \left\{(x,y) \in \bk^2 - \left\{(0,0)\right\}\,|\, x^n = y^m \right\}
$$
is isomorphic to $\bk^* = \bk - \left\{0\right\}$ under the map $t \in \bk^* \mapsto (t^m, t^n) \in C$.

\begin{proof}
Let $a,b$ be integers such that $am + bn = 1$. Then, the inverse of the claimed map is $(x, y) \in C \mapsto x^ay^b \in \bk^*$.
\end{proof}
\end{lem}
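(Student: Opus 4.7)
The plan is to verify that the map $\varphi \colon \bk^* \to C$ defined by $\varphi(t) = (t^m, t^n)$ is an isomorphism of algebraic varieties by exhibiting an explicit inverse morphism. First, I would note that $\varphi$ is a well-defined regular morphism, since for $t \in \bk^*$ both $t^m$ and $t^n$ lie in $\bk^*$ (so that $(t^m,t^n) \neq (0,0)$) and clearly satisfy $(t^m)^n = (t^n)^m$.

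Second, I would use B\'ezout's identity: because $\gcd(m,n)=1$, there exist integers $a,b \in \ZZ$ with $am + bn = 1$, and I would declare the inverse candidate to be $\psi \colon C \to \bk^*$, $\psi(x,y) = x^a y^b$. The subtle point here is that $a$ or $b$ may well be negative, so for $\psi$ to be a morphism one needs both coordinates of any point of $C$ to be invertible. This is where the removal of the origin becomes essential: if $(x,y) \in C$ with $x=0$, then $y^m = x^n = 0$ forces $y=0$, contradicting $(x,y) \neq (0,0)$, and symmetrically for $y=0$. Hence $C \subseteq (\bk^*)^2$ and $\psi$ is a well-defined regular morphism into $\bk^*$.

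Finally, I would check the two compositions. The composition $\psi \circ \varphi$ gives $\psi(t^m,t^n) = t^{am}\, t^{bn} = t^{am+bn} = t$ immediately. For $\varphi \circ \psi$, one has $\varphi(x^a y^b) = \bigl(x^{am} y^{bm},\, x^{an} y^{bn}\bigr)$, and using the relation $x^n = y^m$ I would rewrite $x^{am} y^{bm} = x^{am} (x^n)^b = x^{am+bn} = x$ and $x^{an} y^{bn} = (y^m)^a y^{bn} = y^{am+bn} = y$, concluding that $\varphi \circ \psi = \id_C$.

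No serious obstacle is anticipated; the only delicate point is the well-definedness of $\psi$ at potentially negative exponents, which is precisely what the removal of the cusp singularity $(0,0)$ secures.
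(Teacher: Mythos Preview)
Your proof is correct and follows exactly the same approach as the paper: exhibit the inverse $(x,y) \mapsto x^a y^b$ using B\'ezout coefficients $am+bn=1$. You have simply filled in the routine verifications (well-definedness on $C \subseteq (\bk^*)^2$ and the two compositions) that the paper leaves to the reader.
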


\begin{rmk}
Since $a < 0$ or $b<0$, the above-mentioned inverse map $(x, y) \mapsto x^a y^b$ is not defined for $(x,y)=(0,0)$. Of course, this agrees with the fact that $C \cup \left\{(0,0)\right\}$ is not a smooth curve.
\end{rmk}

Under the isomorphism of Lemma \ref{lem:iso-cusp}, the representation variety can be explicitly described as
$$
\Rep{m,n}(\AGL{1}) = \left\{(t, \alpha, \beta) \in \bk^* \times \bk^2 \,\left|\, \Phi_n(t^m)\alpha = \Phi_m(t^n)\beta\right.\right\},
$$
where, for $l \geq 1$, $\Phi_l$ is the polynomial $\Phi_l(x) = 1 + x + \ldots + x^{l-1} \in \bk[x]$. Written in a more geometric fashion, the morphism $(t, \alpha, \beta) \mapsto t$ defines a regular map
\begin{align}\label{eq:fibration-agl1}
	\Rep{m,n}(\AGL{1}) \stackrel{}{\longrightarrow} \bk^*.
\end{align}
The fiber over $t \in \bk^*$ is the orthogonal complement of the vector $(\Phi_n(t^m), -\Phi_m(t^n)) \in \bk^2$. 
This complement is $\bk$ if $(\Phi_n(t^m), \Phi_m(t^n)) \neq (0,0)$ and is $\bk^2$ otherwise.

Recall that if $\charF(\bk)$ does not divide $l$, then the roots of $\Phi_l$ are the $l$-th roots of unit different from $1$. Denote by $\mu_l^{\bk}$ the group of $l$-th roots of units in $\bk$ (including $1$). In our case, the assumptions on the characteristic of $\bk$ imply that $(\Phi_n(t^m), \Phi_m(t^n)) = (0,0)$ if and only if $t \in \mu_{nm}^{\bk} - \left(\mu_n^{\bk} \cup \mu_m^{\bk}\right)$. Set 
$$
\Omega_{m,n}^\bk = \mu_{nm}^{\bk} - \left(\mu_n^{\bk} \cup \mu_m^{\bk}\right),
$$ 
and notice that it is a finite set. To count its elements, set $\xi_l^{\bk} = |\mu_l^{\bk}|$. Since $m,n$ are coprime $\mu_n^\bk \cap \mu_m^\bk = \left\{1\right\}$ and thus
$$
|\Omega_{m,n}^{\bk}| = \xi_{nm}^{\bk}-\xi_{n}^{\bk}-\xi_{m}^{\bk}+1.
$$ 
Note that if $\bk$ is algebraically closed, then $\xi_l^{\bk} = l$ and, thus, $|\Omega_{m,n}^\bk| = nm-n-m+1 = (n-1)(m-1)$.

Therefore, (\ref{eq:fibration-agl1}) decomposes into the two Zariski locally trivial fibrations
 \begin{align*}
	&\bk \longrightarrow \Rep{m,n}^{(1)}(\AGL{1}) \longrightarrow \bk^* - \Omega_{m,n}^\bk, \\ 
	&\bk^2 \longrightarrow \Rep{m,n}^{(2)}(\AGL{1}) \longrightarrow \Omega_{m,n}^\bk,
 \end{align*}
with $\Rep{m,n}(\AGL{1})  = \Rep{m,n}^{(1)}(\AGL{1}) \sqcup \Rep{m,n}^{(2)}(\AGL{1})$. Thus, this implies that the motive of the whole representation variety is
\begin{align*}
	\left[\Rep{m,n}(\AGL{1}) \right] &= \left[\Rep{m,n}^{(1)}(\AGL{1}) \right] + \left[\Rep{m,n}^{(2)}(\AGL{1}) \right]  = \left[\bk^* - \Omega_{m,n}^{\bk}\right]\left[\bk\right] + \left[\Omega_{m,n}^{\bk}\right]\left[\bk^2\right] \\
	& = (q-1 - |\Omega_{m,n}^{\bk}|)q + |\Omega_{m,n}^{\bk}|q^2 = ( \xi_{nm}^{\bk}-\xi_{n}^{\bk}-\xi_{m}^{\bk}+2)(q^2-q).
\end{align*}

\section{Counterexample to Conjecture \ref{conj:motivic}}

In this section, we give a counterexample to Conjecture \ref{conj:motivic}. In fact, the counterexample will be the 
$\AGL{1}$-representation variety of a $(m,n)$-torus knot with $n$ and $m$ coprime numbers. The core of this fact is the following easy 
lemma, whose proof is provided for completeness.

\begin{lem}
The number of $l$-th roots of unit in $\FF_q$ is $\xi_l^{\FF_q}=\gcd(l, q-1)$.
\end{lem}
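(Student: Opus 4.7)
The plan is to reduce the problem to a counting problem inside the cyclic group $\FF_q^*$. First, I would invoke the standard fact that the multiplicative group $\FF_q^*$ is cyclic of order $q-1$, and fix a generator $g$, so every element of $\FF_q^*$ is uniquely written as $g^k$ with $0\le k\le q-2$. Any $l$-th root of unity in $\FF_q$ is nonzero (since $0^l = 0\neq 1$) and therefore lies in $\FF_q^*$, so $\xi_l^{\FF_q}$ equals the number of $k\in\{0,\dots,q-2\}$ with $(g^k)^l = 1$.

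Next, I would translate this condition into a divisibility statement. Since $g$ has exact order $q-1$, the relation $g^{kl}=1$ is equivalent to $(q-1)\mid kl$. Setting $d=\gcd(l,q-1)$ and writing $l = d\,l'$, $q-1 = d\,n'$ with $\gcd(l',n')=1$, the divisibility $(q-1)\mid kl$ becomes $n'\mid k l'$, and by coprimality this is equivalent to $n'\mid k$. As $k$ ranges over $\{0,1,\ldots,q-2\}$, the admissible values are exactly $k=0,n',2n',\ldots,(d-1)n'$, which gives $d$ solutions.

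Putting the two steps together yields $\xi_l^{\FF_q}=d=\gcd(l,q-1)$. There is essentially no real obstacle here: the argument is a textbook cyclic-group count, and the only mild subtlety is to use at the right moment both the exact order of the generator and the coprimality $\gcd(l',n')=1$ in order to pass from $n'\mid k l'$ to $n'\mid k$.
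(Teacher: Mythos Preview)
Your proof is correct and follows essentially the same approach as the paper: both use that $\FF_q^*$ is cyclic of order $q-1$, reduce the question to counting solutions of $kl\equiv 0\pmod{q-1}$, and conclude that there are $\gcd(l,q-1)$ such solutions. Your version simply spells out the final count in more detail than the paper does.
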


\begin{proof}
Recall that $\FF_q^* = \FF_q - \{0\}$ is a cyclic group of order $q-1$. Under the isomorphism $\FF_q^* \cong \ZZ_{q-1}$ the $l$-th roots of unit correspond to the annihilators of $l$, that is, the elements $a \in \ZZ_{q-1}$ such that $al \equiv 0 \pmod{q-1}$. There are exactly $\gcd(l,q-1)$ of these.
\end{proof}

\begin{rmk}
The number of roots of $\Phi_l(x)$ in $\FF_q$ is $\xi_l^{\FF_q}-1$, regardless of whether they are repeated. This should be necessarily the case when $l$ does not divide $q$. 
\end{rmk}

\begin{cor}\label{cor:counterexample-asymp-count}
Let $m,n > 2$ be natural numbers with $\gcd(m,n)=1$. Then the representation variety $\Rep{m,n}(\AGLC{1})$ is not asymptotically polynomial count.
\end{cor}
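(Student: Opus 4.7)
The plan is to leverage the explicit point count from Section \ref{sec:AGL1}, which yields
\[
|\Rep{m,n}(\AGL{1})(\FF_q)| = (\xi_{nm}^{\FF_q} - \xi_n^{\FF_q} - \xi_m^{\FF_q} + 2)(q^2 - q)
\]
for prime powers $q$ coprime to $nm$. Combining this with $\xi_l^{\FF_q} = \gcd(l, q-1)$ from the preceding lemma and exploiting $\gcd(n,m) = 1$, the leading factor rewrites as $f(q) := (\gcd(n, q-1) - 1)(\gcd(m, q-1) - 1) + 1$, a quantity bounded above by $v_1 := (n-1)(m-1) + 1$ with equality precisely when $nm \mid q - 1$. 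The obstruction to polynomial counting will be that $f$ takes multiple distinct values, each realized on an infinite set of primes.

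The central step is to isolate two such infinite families. By Dirichlet's theorem on primes in arithmetic progressions, infinitely many primes satisfy $p \equiv 1 \pmod{nm}$, realizing the maximum $v_1$; and since $nm \geq 9$ admits reduced residue classes other than $1$, infinitely many primes also satisfy $p \not\equiv 1 \pmod{nm}$, on which one of $\gcd(n,p-1), \gcd(m,p-1)$ is a proper divisor and hence $f(p) < v_1$ strictly. Because $f$ ranges in a finite set of possible values, the pigeonhole principle extracts an infinite subfamily on which $f(p) = v_2$ for some single $v_2 < v_1$.

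The argument closes by a uniqueness-of-interpolation argument: if $P \in \ZZ[t]$ were an asymptotic counting polynomial, then $P(t) - v_i(t^2 - t)$ would vanish on an infinite set for both $i = 1, 2$, forcing $P(t) = v_1(t^2 - t)$ and $P(t) = v_2(t^2 - t)$ simultaneously and thus $v_1 = v_2$, contradicting the construction. The main difficulty I anticipate is conceptual rather than computational: one must ensure not merely that $f(p)$ drifts away from the generic value $v_1$ infinitely often, but that a \emph{specific} second value is hit on an infinite set of primes. The pigeonhole step is what guarantees this and, pleasantly, lets us sidestep the otherwise delicate case analysis on the parity and prime factorization of $nm$ that would be required to write down a second arithmetic trend explicitly.
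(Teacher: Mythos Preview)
Your proof is correct and follows essentially the same strategy as the paper: compute the explicit point count, then use Dirichlet's theorem to produce two infinite families of primes on which the count follows distinct polynomial trends, forcing a contradiction with any putative counting polynomial. The paper constructs the second family explicitly by taking primes $q_\beta \equiv 2 \pmod{nm}$, whereas you argue more abstractly, taking any reduced residue class $\not\equiv 1 \pmod{nm}$ and then pigeonholing on the finitely many possible values of $f$. Your variant is in fact slightly more robust: the paper's choice $q_\beta \equiv 2$ tacitly requires $\gcd(2,nm)=1$ for Dirichlet to apply, which fails when one of $m,n$ is even (e.g.\ $(m,n)=(3,4)$), while your pigeonhole step works uniformly under the stated hypotheses. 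Conversely, the paper's explicit residue class has the expository advantage of naming the second trend concretely rather than merely asserting its existence.
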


\begin{proof}
In Section \ref{sec:AGL1} we proved that the motive of $\Rep{m,n}(\AGL{1})$ for any field $\bk$ is
\begin{align*}
	\left[\Rep{m,n}(\AGL{1}) \right] &= ( \xi_{nm}^{\bk}-\xi_{n}^{\bk}-\xi_{m}^{\bk} + 2)(q^2-q),
\end{align*}
where $\xi_l^{\bk}$ is the number of $l$-th roots of unit in $\bk$. In particular, this shows that on 
a finite field $|\Rep{m,n}(\mathrm{AGL}_{1}(\FF_q))| = ( \xi_{nm}^{\FF_q}-\xi_{n}^{\FF_q}-\xi_{m}^{\FF_q}+1)(q^2-q)$.

By Dirichlet theorem on arithmetic progressions, there exist an infinite sequence of primes $\left\{p_\alpha\right\}_{\alpha=1}^\infty$ satisfying $p_\alpha \equiv 1 \pmod{nm}$ which, under the isomorphism $\ZZ_{nm} \cong \ZZ_n \times \ZZ_m$, correspond to $p_\alpha \equiv 1 \pmod{n}$ 
and $p_\alpha \equiv 1 \pmod{m}$. In other words, $n$ and $m$ divide $p_\alpha-1$ so $\gcd(n,p_\alpha-1) = n$, $\gcd(m, p_\alpha-1) = m$ and $\gcd(nm, p_\alpha-1) = nm$. Therefore, for these primes we have that $\xi_n^{\FF_{p_\alpha}} = n$, $\xi_m^{\FF_{p_\alpha}} = m$ and $\xi_{nm}^{\FF_{p_\alpha}} = nm$ so $|\Rep{m,n}(\mathrm{AGL}_{1}(\FF_{p_\alpha}))| = ((n-1)(m-1)+1)(p_\alpha^2-p_\alpha)$. This shows that if $\Rep{m,n}(\AGLC{1})$ is asymptotically polynomial count, its counting polynomial must be $P(t) = (nm-n-m+2)(t^2-t)$.

However, applying again Dirichlet theorem we also find an infinite sequence of primes $\left\{q_\beta\right\}_{\beta=1}^\infty$ satisfying $q_\beta \equiv 2 \pmod{nm}$ or, equivalently, $q_\beta \equiv 2 \pmod{n}$ and $q_\beta \equiv 2 \pmod{m}$. In this case, this implies that $\xi^{\FF_{q_\beta}}_n = \gcd(n,q_\beta-1) < n$, $\xi^{\FF_{q_\beta}}_m = \gcd(m, q_\beta-1) < m$ and $\xi^{\FF_{q_\beta}}_{nm} = \xi^{\FF_{q_\beta}}_n \xi^{\FF_{q_\beta}}_m = \gcd(nm, q_\beta-1) < nm$. Therefore, $|\Rep{m,n}(\mathrm{AGL}_{1}(\FF_{q_\beta}))| = ((\xi^{\FF_{q_\beta}}_n-1)(\xi^{\FF_{q_\beta}}_m-1)+1)(q_\beta^2-q_\beta) < P(q_\beta)$, contradicting that $P(t)$ is the counting polynomial.
\end{proof}

Due to Corollary \ref{cor:counterexample-asymp-count}, we see that Conjecture \ref{conj:motivic} does not hold true. 
Nevertheless, on the contrary we have the following result.

\begin{prop}
For any $m,n > 0$ with $\gcd(m,n)=1$, the variety $\Rep{m,n}(\AGLC{1})$ is strongly polynomial count.
\end{prop}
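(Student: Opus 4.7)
The plan is to exhibit an explicit spreading out of $\Rep{m,n}(\AGLC{1})$ over the ring $A = \ZZ[\xi_n, \xi_m] \subset \CC$, where $\xi_n, \xi_m$ are fixed primitive $n$-th and $m$-th roots of unity, and to verify that $P(t) = (nm - n - m + 2)(t^2 - t)$ is its counting polynomial. The spreading out is essentially automatic: the defining relation $A^n = B^m$ of $\Rep{m,n}(\AGL{1})$ has integer coefficients, so it cuts out a separated affine $A$-scheme $\cX$ whose base change to $\CC$ recovers $\Rep{m,n}(\AGLC{1})$.

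The crucial step will be to translate the hypothesis that $\phi: A \hookrightarrow \FF_q$ is a monomorphism into arithmetic information on $q$. Since $\xi_n^n = 1$ while $\xi_n^d \neq 1$ in $A$ for every proper divisor $d$ of $n$, the injectivity of $\phi$ forces $\phi(\xi_n) \in \FF_q^*$ to have multiplicative order exactly $n$; likewise $\phi(\xi_m)$ has order exactly $m$. Therefore $n \mid q - 1$ and $m \mid q - 1$, and from $\gcd(n,m) = 1$ we deduce $nm \mid q-1$, so in particular $\charF(\FF_q)$ divides neither $n$ nor $m$. Invoking the lemma that gives $\xi_l^{\FF_q} = \gcd(l, q-1)$, we obtain $\xi_n^{\FF_q} = n$, $\xi_m^{\FF_q} = m$ and $\xi_{nm}^{\FF_q} = nm$, and substituting these values into the motivic identity established in Section \ref{sec:AGL1} (which is defined by equations over $\ZZ$ and hence descends to $\cX$ over $\FF_q$) gives
\[
  |\cX(\FF_q)| = \big(\xi_{nm}^{\FF_q} - \xi_n^{\FF_q} - \xi_m^{\FF_q} + 2\big)(q^2 - q) = (nm - n - m + 2)(q^2 - q) = P(q),
\]
as required.

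The main subtlety I anticipate lies in the precise interpretation of the hypothesis that $\phi$ is a \emph{monomorphism} $A \hookrightarrow \FF_q$: since $A$ has characteristic zero, no literal ring-theoretic embedding of $A$ into $\FF_q$ exists, and the substantive content must be that $\phi$ preserves the multiplicative orders of the distinguished generators $\xi_n$ and $\xi_m$. Equivalently, $\phi$ factors through a maximal ideal $\mathfrak{m} \subset A$ whose residue field contains primitive $n$-th and $m$-th roots of unity realized as the images of $\xi_n$ and $\xi_m$. This is the standard convention in the arithmetic method of Hausel--Rodr\'iguez-Villegas and, as already used in Remark \ref{rmk:asymp-strong}, it is exactly the reading that allows one to isolate the correct prime powers; once this interpretation is adopted, the argument above completes the proof, and together with Corollary \ref{cor:counterexample-asymp-count} delivers the promised counterexample to Conjecture \ref{conj:motivic}.
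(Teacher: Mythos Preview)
Your proof is correct and follows essentially the same approach as the paper: you take $A=\ZZ[\xi_n,\xi_m]$, argue that any ``embedding'' $A\hookrightarrow\FF_q$ forces $n\mid q-1$ and $m\mid q-1$, and then plug $\xi_n^{\FF_q}=n$, $\xi_m^{\FF_q}=m$, $\xi_{nm}^{\FF_q}=nm$ into the motivic formula from Section~\ref{sec:AGL1} to obtain $P(q)=((n-1)(m-1)+1)(q^2-q)$. Your explicit discussion of the characteristic-zero/positive-characteristic tension in the phrase ``monomorphism $A\hookrightarrow\FF_q$'' is a useful clarification that the paper itself leaves implicit (cf.\ Remark~\ref{rmk:asymp-strong}), but the substantive argument is the same.
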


\begin{proof}
The idea is the same as in Remark \ref{rmk:asymp-strong}. Pick elements $\xi_n, \xi_m \in \CC$ 
which are primitive $n$-th and $m$-th roots of unit and set $A = \ZZ[\xi_n, \xi_m]$. Any embedding $A \hookrightarrow \FF_q$ must send $\xi_n$ to an element of $\FF_q^*$ of order $n$ so $n$ must divide $|\FF_q^*| = q-1$. Analogously, $m$ must divide $q-1$. This means that for these finite fields we are forced to have $\xi_{nm}^{\FF_q} = nm$, $\xi_n^{\FF_q} = n$ and $\xi_m^{\FF_q} = m$, 
so the number of points is $|\Rep{m,n}(\mathrm{AGL}_{1}(\FF_{q}))| = ((n-1)(m-1)+1)(q^2-q)$. This shows that $\Rep{m,n}(\AGLC{1})$ is strongly polynomial count with counting polynomial $P(t) = ((n-1)(m-1)+1)(t^2-t)$.
\end{proof}

\begin{rmk}
The previous counting polynomial agrees with the $E$-polynomial of the representation variety in the case $\bk = \CC$, as claimed in Theorem \ref{thm:hausel-counting}. If $\bk = \CC$ (indeed if $\bk$ is algebraically closed) then $\xi_l^{\CC} = l$ so 
\begin{align*}
	e\left(\Rep{m,n}(\AGLC{1}) \right) &= ( \xi_{nm}^{\CC}-\xi_{n}^{\CC}-\xi_{m}^{\CC} + 2)(q^2-q) = ((n-1)(m-1)+1)(q^2-q) = P(q).
\end{align*}
\end{rmk}

\begin{rmk}
To show the erratic behavior of the number of points of $\Rep{m,n}(\AGL{1})$ over finite fields, in Figure \ref{fig:counting} we depict the number of points of $\Rep{m,n}(\mathrm{AGL}_{1}(\FF_q))$ for the prime powers $2 \leq q \leq 3000$. As we can observe, there appear several trends of points, which represent several `counting polynomials' for this variety. The number of trends seems to increase when $n$ and $m$ are composed numbers. 

	\begin{figure}[h!]
	\begin{center}
	\begin{subfigure}{.45\textwidth}
	\includegraphics[scale=0.47]{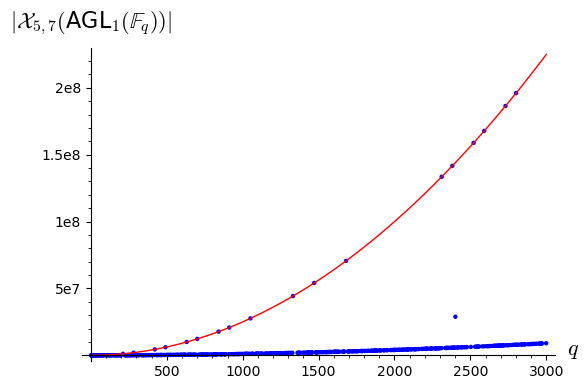}
	\subcaption[]{$(m,n) = (4,5)$}
	\end{subfigure}
	\begin{subfigure}{.45\textwidth}
	\includegraphics[scale=0.47]{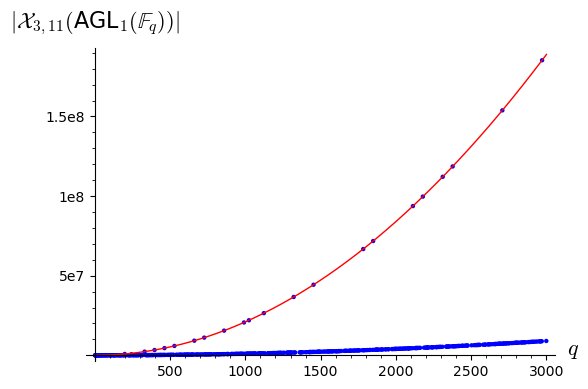}
	\subcaption[]{$(m,n) = (3,11)$}
	\end{subfigure}
	
	\vspace{0.3cm}
	
	\begin{subfigure}{.45\textwidth}
	\includegraphics[scale=0.47]{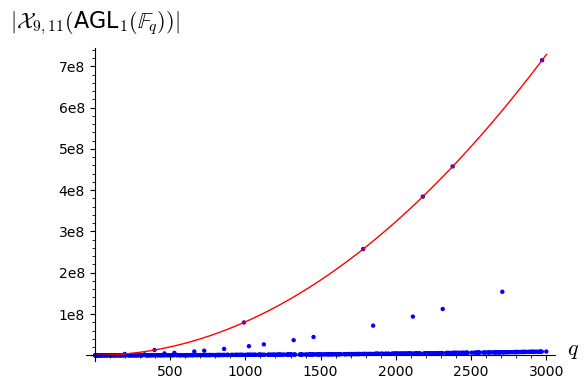}
	\subcaption[]{$(m,n) = (9,11)$}
	\end{subfigure}
	\begin{subfigure}{.45\textwidth}
	\includegraphics[scale=0.47]{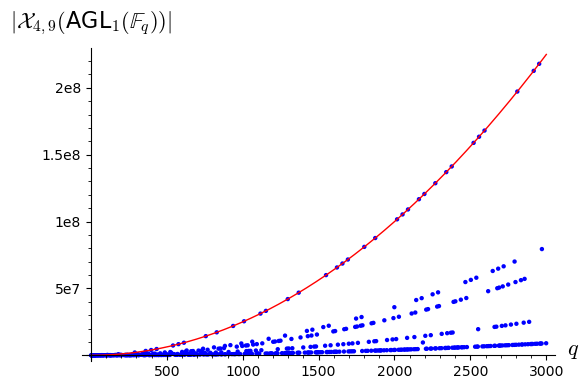}
	\subcaption[]{$(m,n) = (4,9)$}
	\end{subfigure}
	\caption{Number of points of the representation variety $\Rep{m,n}(\mathrm{AGL}_{1}(\FF_q))$ for several values of $(m,n)$ and $q$ the prime powers $q \leq 3000$. In red, the true counting polynomial that agrees with the $E$-polynomial.}
	\label{fig:counting}
	\end{center}
		\end{figure}
		
Notice that this erratic behavior poses a philosophical question. Suppose that, for some reason, we know that $\Rep{m,n}(\AGLC{1})$ 
is polynomial counting. In that case, the degree of the $E$-polynomial can be estimated from the dimension of $\Rep{m,n}(\AGLC{1})$. 
Therefore, an easy way of computing the $E$-polynomial is just to start counting points in small finite fields until we get 
enough interpolating points, which should determine the $E$-polynomial. This approach has been used for instance in \cite{GPLM}. However, the previous plot shows that this is not a good idea: most of the prime finite fields give the wrong answer, and only a few (in terms of density) prime powers can be used to interpolate the $E$-polynomial.

Moreover, even though we were able to compute the number of points of $\Rep{m,n}(\mathrm{AGL}_{1}(\FF_q))$ for any prime power $q$, say through arithmetic arguments, it is not clear how to extract the $E$-polynomial from these computations. The computations will only show that the number of points of the variety follows several trends, but the right trend is unknown beforehand (and it is not highlighted by density arguments). The selection rule for the right trend is much deeper and hard to discover: there exists a finitely generated subring of $\CC$ that is contained exactly in the finite fields of a single trend. Only one trend can satisfy this property, and that one is the correct trend.
\end{rmk}

In this spirit, we cannot expect asymptotic polynomial count to be a property that can be read from the motive. However, there is still a hope with strongly polynomial count. This poses the following conjecture which, to our knowledge, is still open. 

\begin{conjecture}\label{conj:motivic-strong}
Let $\cX$ be a reduced $\ZZ$-scheme and let $X = \cX(\CC)$ be the associated complex algebraic variety. Then $X$ is strongly polynomial count if and only if it is polynomial motivic, and the counting and motivic polynomials agree.
\end{conjecture}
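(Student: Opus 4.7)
The plan is to attack the two directions of the biconditional separately, since they sit at very different depths. For the implication \emph{polynomial motivic} $\Longrightarrow$ \emph{strongly polynomial count}, I would start from the observation that an equality $[X] = P(q)$ in $\K{\Var{\CC}}$ is witnessed by only \emph{finitely many} scissor and product relations $[X_i] = [Y_i] + [Z_i]$, $[A_i \times B_i] = [A_i][B_i]$ in $\K{\Var{\CC}}$. Each of the finitely many varieties appearing in this tableau is cut out by finitely many polynomial equations, and the finitely many coefficients jointly generate a finitely generated subring $A \subset \CC$. Spreading $X$ together with all the auxiliary strata out to separated $A$-schemes, the same scissor identities persist after base change along any monomorphism $A \hookrightarrow \FF_q$, so counting gives $|\cX(\FF_q)| = P(q)$, which is precisely strong polynomial count with matching counting polynomial.

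For the reverse implication \emph{strongly polynomial count} $\Longrightarrow$ \emph{polynomial motivic}, I would leverage Theorem \ref{thm:hausel-counting}, which already yields the Hodge-theoretic equality $e(X)(u,v) = P(uv)$. The task is then to lift this identity from $\ZZ[u,v]$ all the way back to $\K{\Var{\CC}}$. The natural strategy is to search for a stratification $X = \bigsqcup_\alpha S_\alpha$ whose individual pieces $[S_\alpha]$ lie in $\ZZ[q] \subset \K{\Var{\CC}}$: iterated affine or $\bk^*$-bundles, Bia\l{}ynicki-Birula cells for a torus action, or cellular decompositions of the kind one obtains for Schubert-like varieties. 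When such a decomposition can be produced geometrically, additivity and the Zariski-local triviality rule from Section 2.2 deliver $[X] \in \ZZ[q]$, and comparing $E$-polynomials forces the motivic polynomial to coincide with $P$.

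The main obstacle is precisely the reverse implication, and it is non-formal. As highlighted in Remark \ref{rem:zerodivisor}, $\K{\Var{\CC}}$ is strictly finer than the Hodge or point-count realization: the Lefschetz motive $q$ is a zero divisor, and the kernel of the $E$-polynomial homomorphism is nontrivial. Consequently, the equality $e(X) = P(q)$ guaranteed by Katz does not a priori force $[X] = P(q)$ in $\K{\Var{\CC}}$; the difference could be a nonzero annihilator of the Hodge realization. Bridging this gap would seem to require a strictly stronger form of Katz's theorem that enhances the Betti realization to a motivic realization, perhaps through $\ell$-adic Galois-theoretic semisimplicity strong enough to rule out invisible pieces, or through an absolute motivic integration argument refining point counting to a measure on $\K{\Var{\CC}}$. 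Since the paper flags the conjecture as open, I would not aim for a complete proof in full generality; instead, my first concrete target would be to verify both directions within the class of torus-knot representation varieties themselves, where the explicit stratification of Section \ref{sec:AGL1} and the counting obtained via Corollary \ref{cor:strongly} provide a controlled laboratory in which the necessary compatibility can be checked by hand.
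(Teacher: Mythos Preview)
The statement you are attempting is Conjecture~\ref{conj:motivic-strong}, which the paper explicitly poses as open (``to our knowledge, is still open''). There is therefore no proof in the paper to compare your proposal against; the authors do not attempt one. Your write-up is accordingly not a proof but a strategy sketch, and you yourself acknowledge this in the final paragraph.

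On the content of your sketch: the forward direction (polynomial motivic $\Rightarrow$ strongly polynomial count) is broadly reasonable. An identity $[X]=P(q)$ in $\K{\Var{\CC}}$ is indeed witnessed by finitely many scissor relations, and spreading out the finitely many varieties involved to a common finitely generated ring $A\subset\CC$ is the right move. One point you glide over: closed immersions and stratifications need not persist over all of $\Spec A$ on the nose; one must invoke generic flatness and constructibility, then \emph{localize} $A$ so that every relation holds fibrewise. After that localization $A$ is still finitely generated, and the counting identity then holds for every embedding $A\hookrightarrow\FF_q$. This is standard spreading-out technology, not a genuine obstacle, but it should be said.

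The reverse direction is, as you correctly diagnose, where the real difficulty lies, and your proposal does not close the gap---nor does the paper expect it to. Katz's theorem gives only $e(X)=P(uv)$, and the $E$-polynomial homomorphism $\K{\Var{\CC}}\to\ZZ[u,v]$ has nontrivial kernel, so one cannot formally lift back. Your suggested mechanisms (Bia\l{}ynicki-Birula, cellular decompositions) produce polynomial-motivic varieties, but there is no reason a strongly polynomial-count variety must carry such structure; this is precisely why the conjecture is open. Your fallback of verifying the conjecture for the torus-knot representation varieties at hand is sensible, and in fact the paper's Theorem~\ref{thm:main} together with Corollary~\ref{cor:strongly} already does this implicitly: the explicit motive lies in $\ZZ[q]$, and the strong count is established, with matching polynomials.
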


\section{$\AGL{2}$-representation varieties of torus knots}\label{sec:AGL2}

In this section, we compute the motive of the $\AGL{2}$-representation variety of the $(m,n)$-torus knot. Suppose that we have an element $(A, B) \in \Rep{m,n}(\AGL{2})$ with matrices of the form
$$
	A = \begin{pmatrix}
	1 & 0 \\
	\alpha & A_0
	\end{pmatrix}, \qquad B = \begin{pmatrix}
	1 & 0 \\
	\beta & B_0
	\end{pmatrix},
$$
where $A_0, B_0 \in \GL{2}$ while $\alpha, \beta \in \bk^2$. Computing the powers we obtain
$$
	A^n = \begin{pmatrix}
	1 & 0 \\
	\Phi_n(A_0)\alpha & A^n_0
	\end{pmatrix}, \qquad B^m = \begin{pmatrix}
	1 & 0 \\
	\Phi_m(B_0)\beta & B^m_0
	\end{pmatrix}.
$$
Therefore, the $\AGL{2}$-representation variety is explicitly given by
\begin{equation}\label{eq:AGL2-rep}
\Rep{m,n}(\AGL{2}) = \left\{(A_0, B_0, \alpha, \beta) \in \GL{2}^2 \times \bk^2 \,\left|\, \begin{matrix} A_0^n = B_0^m \\ \Phi_n(A_0)\alpha = \Phi_m(B_0)\beta \end{matrix}\right.\right\}.
\end{equation}

In particular, these conditions imply that $(A_0, B_0) \in \Rep{m,n}(\GL{2})$. Let us decompose the variety as
$$
	\Rep{m,n}(\AGL{2}) = \Rep{m,n}^{\irr}(\AGL{2}) \sqcup \Rep{m,n}^{\red}(\AGL{2}).
$$
Here, $\Rep{m,n}^{\irr}(\AGL{2})$ (resp.\ $\Rep{m,n}^{\red}(\AGL{2})$) are the representations $(A, B)$ with $(A_0, B_0)$ an irreducible (resp.\ reducible) representation in the algebraic closure $\overline{\bk}$ of $\bk$.
Note that  the superscripts refer to the reducibility/irreducibility of the vectorial part of the representation, not to the representation itself. 

%
%
%
%

First of all, let us analyze the case where $(A_0, B_0)$ is an irreducible representation. In that case, the eigenvalues are restricted as the following result shows.

\begin{lem}
Let $\rho = (A_0, B_0) \in \Rep{m,n}^{\irr}(\GL{r})$ be an irreducible representation over $\overline{\bk}$. 
Then $A_0^n = B_0^m = \omega\, \mathrm{Id}$,  for some $\omega \in \bk^*$.
\end{lem}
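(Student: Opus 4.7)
The plan is a direct application of Schur's lemma. Let $M = A_0^n = B_0^m$. Since $M$ is a power of $A_0$, it obviously commutes with $A_0$; and similarly, being a power of $B_0$, it commutes with $B_0$. Therefore $M$ commutes with every element of the subalgebra of $\mathrm{Mat}_{r}(\overline{\bk})$ generated by $A_0$ and $B_0$. This subalgebra acts irreducibly on $\overline{\bk}^r$ by hypothesis.

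By Schur's lemma over the algebraically closed field $\overline{\bk}$, the commutant of an irreducible subalgebra of $\mathrm{Mat}_{r}(\overline{\bk})$ consists of scalar matrices. Consequently, $M = \omega \, \mathrm{Id}$ for some $\omega \in \overline{\bk}$.

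To descend the scalar $\omega$ to the base field, observe that $A_0$ has entries in $\bk$, so $A_0^n$ also has entries in $\bk$; reading off any diagonal entry of the equality $A_0^n = \omega \, \mathrm{Id}$ shows that $\omega \in \bk$. Invertibility of $A_0$ forces $\omega \neq 0$, so $\omega \in \bk^*$, completing the proof.

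The argument is entirely routine; the only point requiring minor care is the descent from $\overline{\bk}$ to $\bk$, which is handled by inspecting the matrix entries. No serious obstacle is expected.
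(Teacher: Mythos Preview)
Your proof is correct and essentially identical to the paper's own argument: both observe that $A_0^n=B_0^m$ commutes with $A_0$ and $B_0$, apply Schur's lemma over $\overline{\bk}$ to conclude it is a scalar, and then descend the scalar to $\bk^*$ by noting that $A_0^n$ has entries in $\bk$.
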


\begin{proof}
Notice that $A^n_0$ is a linear map that is equivariant with respect to the representation $\rho$. By Schur 
lemma applied to the algebraic closure $\overline{\bk}$, this implies that
$A_0^n$ must be a multiple of the identity, say $A_0^n = \omega \, \mathrm{Id}$ for some $\omega \in \overline{\bk}$ and, since $B_0^m = A_0^n$, also $B_0^m=
\omega\, \mathrm{Id}$. However, $A_0^n$ has coefficients in $\bk$ so $\omega\in \bk^*$.
\end{proof}

\begin{cor}\label{cor;1}
Let $\rho = (A_0, B_0) \in \Rep{m,n}^{\irr}(\GL{r})$ be an irreducible representation over $\overline{\bk}$
and let $\lambda_1, \ldots, \lambda_r$ and
$\eta_1, \ldots, \eta_r$ be the eigenvalues of $A_0$ and $B_0$, respectively. If $\charF(\bk)$ does not divide $n$ and $m$, then $A_0$ and $B_0$ are diagonalizable and
$\lambda_1^n = \ldots = \lambda_r^n = \eta_1^m = \ldots = \eta_r^m$. 

Furthermore, if $r = 2$, $\charF(\bk) \neq 2$, and $m,n$ are odd, then $\lambda_i, \eta_j\in \bk^*$.
\end{cor}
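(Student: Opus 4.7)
The plan is to treat the two assertions separately. For the diagonalisability and the eigenvalue equation, the preceding lemma already gives $A_0^n = B_0^m = \omega\,\Id$ with $\omega \in \bk^*$, so $A_0$ is annihilated by $p(x) = x^n - \omega$. Since $\charF(\bk)\nmid n$ and $\omega\neq 0$, the derivative $nx^{n-1}$ shares no common root with $p$, i.e.\ $p$ is separable; hence the minimal polynomial of $A_0$ over $\bk$ (a divisor of $p$) is separable, so $A_0$ is semisimple and hence diagonalisable over $\overline{\bk}$. The same reasoning applied to $B_0$ with $x^m - \omega$ gives its diagonalisability. Each eigenvalue $\lambda_i$ is a root of $p$, whence $\lambda_i^n = \omega$; similarly $\eta_j^m = \omega$, yielding the claimed chain of equalities.

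For the claim that the eigenvalues lie in $\bk^*$, the first step is to show that $\det A_0$ is a square in $\bk^*$. Taking determinants of $A_0^n = \omega\,\Id$ gives $(\det A_0)^n = \omega^2$; since $n$ is odd, B\'ezout yields integers $a,b$ with $an+2b=1$ and hence
\[
\det A_0 \;=\; \omega^{2a}(\det A_0)^{2b} \;=\; \bigl(\omega^a (\det A_0)^b\bigr)^2.
\]
Setting $\gamma := \omega^a (\det A_0)^b \in \bk^*$ (and replacing $\gamma$ by $-\gamma$ if necessary, which is permissible because $n$ is odd) one arranges $\gamma^2 = \det A_0$ and $\gamma^n = \omega$. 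Then $\tilde{A}_0 := \gamma^{-1} A_0$ lies in $\SL{2}$ with $\tilde{A}_0^n = \Id$; the analogous reduction applied to $B_0$ produces $\tilde{B}_0 \in \SL{2}$ with $\tilde{B}_0^m = \Id$. Rescaling by a central scalar preserves absolute irreducibility, so $(\tilde{A}_0,\tilde{B}_0)$ is still absolutely irreducible.

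It now suffices to show that the eigenvalues $\mu,\mu^{-1}$ of $\tilde{A}_0$ lie in $\bk$, since then $\lambda_i = \gamma\mu^{\pm 1}\in\bk^*$; the same reasoning handles $\tilde{B}_0$. Because $(\tilde A_0,\tilde B_0)$ is absolutely irreducible, $\tilde A_0$ cannot be scalar, so the two eigenvalues are distinct, and since $n$ is odd one even has $\mu \neq \pm 1$. I would then argue by contradiction: if $\mu \notin \bk$, then $\bk(\mu)$ is a quadratic extension on which the non-trivial Galois involution sends $\mu \mapsto \mu^{-1}$, making $\mu$ a non-trivial $n$-th root of unity living in a quadratic extension of $\bk$. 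An identical description applies to the eigenvalues of $\tilde{B}_0$, producing a non-trivial $m$-th root of unity in a (possibly different) quadratic extension of $\bk$. The main obstacle is precisely this last step: to derive a contradiction from that non-split configuration by combining (i) the coprimality and oddness of $m,n$, (ii) the absence of a common eigenvector over $\overline{\bk}$, and (iii) the constraint of both $\tilde{A}_0,\tilde{B}_0$ lying in $\SL{2}$ with prescribed finite odd orders. The other ingredients (diagonalisability and the reduction to $\SL{2}$) are routine; the arithmetic incompatibility of the two non-split tori with the absolute irreducibility is where all the work lies.
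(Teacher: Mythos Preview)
Your argument for the first assertion is correct and is essentially the paper's: the paper simply notes that the $l$-th power of a non-diagonalisable matrix remains non-diagonalisable when $\charF(\bk)\nmid l$, which is your separability observation in contrapositive form.

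The second assertion, however, is false as stated, so the ``last step'' you isolate cannot be carried out. Take $\bk=\RR$, $n=3$, $m=5$; let $A_0\in\mathrm{SL}_2(\RR)$ be the rotation by $2\pi/3$ and $B_0 = P R_{2\pi/5} P^{-1}$ with $R_{2\pi/5}$ the rotation by $2\pi/5$ and $P=\left(\begin{smallmatrix}1&1\\0&1\end{smallmatrix}\right)$. Then $A_0^3=B_0^5=\Id$, and one checks directly that $A_0$ and $B_0$ share no eigenline over $\CC$, so the pair is absolutely irreducible; yet the eigenvalues $e^{\pm 2\pi i/3}$ and $e^{\pm 2\pi i/5}$ are not real. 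The same phenomenon occurs over finite fields: over $\FF_{11}$ (with $n=3$, $m=5$) the matrix $\left(\begin{smallmatrix}0&-1\\1&-1\end{smallmatrix}\right)$ has cube $\Id$ and irreducible characteristic polynomial $x^2+x+1$, while a diagonal $B_0$ with distinct fifth roots of unity $3,9\in\FF_{11}^*$ on the diagonal yields an absolutely irreducible pair. Your reduction to $\mathrm{SL}_2$ is valid, but these examples sit exactly in the configuration you hoped to exclude (two elements of odd coprime orders, each in a non-split torus), so no ``arithmetic incompatibility'' materialises. The paper's own argument has the matching gap: it writes $\lambda_i=a+b$ with $a\in\bk$, $b^2\in\bk$, expands $(a+b)^n=A+bB$ with $A,B\in\bk$, and infers $b\in\bk$ from $\lambda_i^n\in\bk$; but that needs $B\neq 0$, whereas applying the Galois involution $b\mapsto -b$ to $\lambda_i^n=\omega\in\bk$ gives $(a-b)^n=\omega$ as well, hence $2bB=(a+b)^n-(a-b)^n=0$ and $B=0$ whenever $b\notin\bk$. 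The conclusion does become true under the extra hypothesis that $\bk$ contains all $n$-th and $m$-th roots of unity, and in that case your $\mathrm{SL}_2$ reduction finishes the proof immediately.
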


\begin{proof}
The first statement is clear since the $l$-th power of non-diagonalizable matrix is non-diagonalizable provided that $\charF(\bk)$ does not divide $l$. For the second, note that $\lambda_i$ is a root of a polynomial with coefficients in $\bk$, so either $\lambda_i\in \bk^*$ or there is a quadratic extension $\bk \subset \bk(\lambda_i)$. Since $\bk$ has not characteristic $2$, this implies that $\lambda_i = a + b$ with $a \in \bk$ and $b^2 \in \bk$. Writing $n = 2r+1$ we have that
$$
	\lambda_i^n = \sum_j a^{n-i} b^{i} = \sum_{j=0}^r a^{n-2j} b^{2j} + b\left(\sum_{j=0}^r a^{n-2j-1} b^{2j}\right).
$$
Since $\lambda_i^n, a, b^{2j} \in \bk$, it follows that $b \in \bk$ and thus $\lambda_i \in \bk$.
\end{proof}

In an analogous way, we have the following result.

\begin{cor}\label{cor;2}
Suppose $m,n$ are odd coprime numbers, $\charF(\bk) \neq 2$, and $\charF(\bk)$ does not divide $n,m$. Let $\rho = (A_0, B_0) \in \Rep{m,n}^{\red}(\GL{2})$ be a reducible representation over $\overline{\bk}$. Then the eigenvalues $\lambda_i, \eta_j\in \bk^*$, and the eigenvectors are defined over 
$\bk$. In particular, $\rho$ is reducible over $\bk$.
\end{cor}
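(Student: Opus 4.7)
Since $\rho=(A_0,B_0)$ is reducible over $\overline{\bk}$, my first move is to simultaneously upper-triangularize $A_0,B_0$ in a basis of $\overline{\bk}^2$. Writing them as triangular with diagonal entries $\lambda_1,\lambda_2$ for $A_0$ and $\eta_1,\eta_2$ for $B_0$, and comparing the diagonals of $A_0^n=B_0^m$, I obtain the eigenvalue pairings $\lambda_i^n=\eta_i^m$ for $i=1,2$.

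The crux is then to show $\lambda_i,\eta_j\in\bk$. Exactly as in Corollary \ref{cor;1}, each $\lambda_i$ is a root of the degree-two characteristic polynomial $\chi_{A_0}(x)\in\bk[x]$, so it lies either in $\bk$ or in a quadratic extension $\bk(b)$ with $b^2\in\bk$, $b\neq 0$ (using $\charF(\bk)\neq 2$); and analogously for $\eta_j$. To combine the two constraints I would invoke the coprimality $\gcd(m,n)=1$: choose $u,v\in\ZZ$ with $um+vn=1$ and set $\mu_i := \lambda_i^u\eta_i^v$. A direct manipulation using $\lambda_i^n=\eta_i^m$ gives $\mu_i^m=\lambda_i$ and $\mu_i^n=\eta_i$, so both eigenvalues lie in $\bk(\mu_i)$. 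Writing $\mu_i=a+b$ with $a\in\bk$ and $b^2\in\bk$, I would expand $\mu_i^m$ and $\mu_i^n$ by the binomial theorem, each decomposing into a $\bk$-part plus $b$ times a $\bk$-element exactly as in Corollary \ref{cor;1}. Because both $m$ and $n$ are odd, the resulting pair of identities forces $b\in\bk$, whence $\mu_i\in\bk$ and therefore $\lambda_i,\eta_j\in\bk^*$.

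With all eigenvalues now in $\bk$, the eigenspaces of $A_0$ and $B_0$ are $\bk$-rational subspaces, being kernels of $\bk$-linear maps. The first basis vector of the simultaneous triangular form spans a common invariant line that can be rescaled to lie in $\bk^2$; in the degenerate case where one of $A_0,B_0$ is scalar, any $\bk$-rational eigenline of the other matrix plays the same role. In either scenario $\rho$ becomes reducible over $\bk$ and the eigenvectors descend to $\bk$.

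The main obstacle is the middle step. In Corollary \ref{cor;1}, Schur's lemma provided the very clean starting point $A_0^n=B_0^m=\omega\,\Id$ with $\omega\in\bk$, so that $\lambda_i^n\in\bk$ was given for free. In the reducible case we only know that the common value $\lambda_i^n=\eta_i^m$ lies a priori in $\bk(\lambda_i,\eta_i)$, and the argument must leverage simultaneously the oddness and the coprimality of $m,n$. The reduction to the auxiliary element $\mu_i=\lambda_i^u\eta_i^v$ is precisely the algebraic manoeuvre that packages the two separate eigenvalue constraints into a single equation, amenable to the odd-exponent binomial argument of the previous corollary; the hypothesis $\charF(\bk)\neq 2$ is what guarantees the quadratic-extension normal form in which that argument is carried out.
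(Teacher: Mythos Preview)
Your argument breaks down at the binomial step. You correctly identify that the key input in Corollary~\ref{cor;1} was $\lambda_i^n = \omega \in \bk$, and you introduce $\mu_i = \lambda_i^u\eta_i^v$ to compensate; but after writing $\mu_i = a + b$ with $a\in\bk$, $b^2\in\bk$ (which already needs justification, since a priori $\mu_i \in \bk(\lambda_i,\eta_i)$ and this compositum may have degree $4$ over $\bk$), the expansions $\mu_i^m = P_1 + bQ_1$ and $\mu_i^n = P_2 + bQ_2$ only equal $\lambda_i$ and $\eta_i$. Neither of these is known to lie in $\bk$ --- that is precisely what you are trying to prove --- so there is no identity of the shape ``(element of $\bk$) $= P + bQ$'' from which to extract $b \in \bk$. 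The ``pair of identities'' you invoke yields nothing beyond $\lambda_i, \eta_i \in \bk(b)$, which does not force $b \in \bk$.

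This gap cannot be closed, because the statement itself fails. Take $\bk = \RR$, $(m,n)=(5,3)$, and let $A_0, B_0 \in \mathrm{SO}(2) \subset \GL{2}$ be the rotations by $2\pi/3$ and $2\pi/5$. Then $A_0^3 = B_0^5 = \Id$ and, since $A_0,B_0$ commute, the pair is reducible over $\CC$; yet the eigenvalues $e^{\pm 2\pi i/3}$ and $e^{\pm 2\pi i/5}$ are not real. Here $\mu_1 = e^{2\pi i/15}$, and indeed $\mu_1^5 = \lambda_1$ and $\mu_1^3 = \eta_1$ are both non-real, so your expansion produces no contradiction. The paper's ``analogous'' argument has the same defect: the odd-exponent computation in Corollary~\ref{cor;1} tacitly assumes that the $b$-coefficient in the expansion of $(a+b)^n$ is nonzero, but for $a+b = e^{2\pi i/3}$ and $n=3$ it vanishes, and conjugating $B_0$ above so that it no longer shares eigenlines with $A_0$ produces an irreducible pair violating Corollary~\ref{cor;1} as well.
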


Due to the previous results, from now on we shall assume the hypotheses on $n,m$ and $\charF(\bk)$ of Corollary \ref{cor;2}.

\subsection{The irreducible stratum}\label{sec:irred-strat}

In order to analyze the conditions of (\ref{eq:AGL2-rep}), observe that $(A, B) \mapsto (A_0, B_0)$ defines a morphism
\begin{equation}\label{eq:map-fibration}
	\Rep{m,n}^{\irr}(\AGL{2}) \longrightarrow \Rep{m,n}^{\irr}(\GL{2}).
\end{equation}
The fiber of this morphism at $(A_0, B_0)$ is the kernel of the map
\begin{equation}\label{eq:map-lambda}
	\Lambda: \bk^2 \times \bk^2 \to \bk^2, \quad \Lambda(\alpha, \beta) = \Phi_n(A_0)\alpha - \Phi_m(B_0)\beta.
\end{equation}

The following appears in  \cite[Proposition 7.3]{MP}{} for $\bk=\CC$. Recall from Example \ref{ex:calculations-kvar} that $[\PGL{2}]=q^3-q$.

\begin{prop} \label{prop:GL2}
For the torus knot of type $(m,n)$, with $m,n$ both odd,
we have
$$
	[\Rep{m,n}^{\irr}(\GL{2})] = \frac14[\PGL{2}] |\Omega_{m,n}^{\bk}| (q-2) (q-1).
$$
\end{prop}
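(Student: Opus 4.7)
The plan is to realize $\Rep{m,n}^{\irr}(\GL{2})$ as the free quotient of a product $V = T \times M_{\mathrm{lines}}$ by $\ZZ_2 \times \ZZ_2$, with $T$ the finite set of ordered eigenvalue tuples and $M_{\mathrm{lines}}$ parametrizing ordered eigenline decompositions, so that the motive falls out from a balanced sign-isotypic decomposition.

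First I invoke Corollary \ref{cor;1} to note that an irreducible pair $(A_0, B_0)$ must be diagonalizable with two distinct eigenvalues in $\bk^*$: if, say, $A_0$ were scalar, every line would be an $A_0$-eigenline and $(A_0, B_0)$ would share $B_0$'s invariant line (which always exists over $\bar{\bk}$), contradicting irreducibility. Hence each irreducible $(A_0, B_0)$ is encoded by unordered data $\{(\lambda_i, L_i)\}_{i=1,2}$ and $\{(\eta_j, L_j')\}_{j=1,2}$ with the open condition $\{L_1, L_2\} \cap \{L_1', L_2'\} = \emptyset$. Setting
\[
T = \{(\lambda_1, \lambda_2, \eta_1, \eta_2) \in (\bk^*)^4 : \lambda_i^n = \eta_j^m,\ \lambda_1 \neq \lambda_2,\ \eta_1 \neq \eta_2\}
\]
and $M_{\mathrm{lines}} = \{(L_1, L_2, L_1', L_2') \in (\PP^1)^4 : L_1 \neq L_2,\ L_1' \neq L_2',\ L_i' \notin \{L_1, L_2\}\}$, one has $\Rep{m,n}^{\irr}(\GL{2}) = V/(\ZZ_2 \times \ZZ_2)$, where the two $\ZZ_2$'s independently swap the $\lambda/L$ and $\eta/L'$ labellings.

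Next I compute the two classes. By Lemma \ref{lem:iso-cusp} the anchor pair $(\lambda_1, \eta_1)$ is parametrized by $s \in \bk^*$ via $(s^m, s^n)$, and the remaining choices $\lambda_2 = \lambda_1 \zeta$, $\eta_2 = \eta_1 \xi$ range over $\mu_n^\bk \setminus \{1\}$ and $\mu_m^\bk \setminus \{1\}$; since $\gcd(n,m) = 1$ gives $\mu_{nm}^\bk \cong \mu_n^\bk \times \mu_m^\bk$, one obtains $|T| = (q-1)(\xi_n^\bk - 1)(\xi_m^\bk - 1) = (q-1)\,|\Omega_{m,n}^\bk|$. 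For $M_{\mathrm{lines}}$, the projection to $(L_1, L_2) \in \PP^1 \times \PP^1 \setminus \Delta$ of class $q(q+1)$ is a Zariski-locally trivial fibration whose fiber, obtained by picking $L_1' \in \PP^1 \setminus \{L_1, L_2\}$ and then $L_2'$ in a three-point complement, has class $(q-1)(q-2)$, so $[M_{\mathrm{lines}}] = q(q+1)(q-1)(q-2)$.

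Finally, since $\lambda_1 \neq \lambda_2$ and $\eta_1 \neq \eta_2$, the $\ZZ_2 \times \ZZ_2$-action on $T$ is free, and each of the four sign-isotypic pieces has class $[T]^\epsilon = |T|/4$ for $\epsilon \in \{\pm\}^2$. Iterating formula (\ref{eqn:2bis}) gives the $\ZZ_2 \times \ZZ_2$-equivariant K\"unneth identity
\[
[\Rep{m,n}^{\irr}(\GL{2})] = \sum_{\epsilon \in \{\pm\}^2} [T]^\epsilon [M_{\mathrm{lines}}]^\epsilon = \frac{|T|}{4}\sum_\epsilon [M_{\mathrm{lines}}]^\epsilon = \frac{|T|}{4}[M_{\mathrm{lines}}],
\]
since the sign components sum to the total motive. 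Substituting and factoring $[\PGL{2}] = q(q+1)(q-1)$ yields $\tfrac14[\PGL{2}]\,|\Omega_{m,n}^\bk|\,(q-1)(q-2)$, as desired. The main delicate point is verifying the iterated K\"unneth identity in $\K{\Var{\bk}}$: this follows from two applications of (\ref{eqn:2bis}), using that both swap actions are compatible with the Zariski-local trivializations of $M_{\mathrm{lines}}$.
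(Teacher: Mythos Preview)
Your proof is correct and follows essentially the same strategy as the paper's: both parametrize the irreducible locus by eigenvalue data (a finite set times $\bk^*$, via Lemma~\ref{lem:iso-cusp}) and eigenline data (the configuration space $(\PP^1)^4$ minus the coarse diagonal), then pass to the $\ZZ_2\times\ZZ_2$-quotient using freeness on the eigenvalue factor. The only notable difference is that you route the final quotient computation through an iterated equivariant K\"unneth identity, which is more machinery than needed here: since the $\ZZ_2\times\ZZ_2$-action on the finite set $T$ is free, choosing orbit representatives $T_0\subset T$ gives directly $(T\times M_{\mathrm{lines}})/(\ZZ_2\times\ZZ_2)\cong T_0\times M_{\mathrm{lines}}$, hence $[\Rep{m,n}^{\irr}(\GL{2})]=\tfrac{|T|}{4}[M_{\mathrm{lines}}]$ without any sign-isotypic bookkeeping. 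The paper proceeds in exactly this spirit, asserting freeness and dividing by $4$.
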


\begin{proof}
 Let $(A_0,B_0)\in \Rep{m,n}^{\irr}(\GL{2})$. Let $\lambda_1,\lambda_2$ be the eigenvalues of $A_0$ and 
$\eta_1,\eta_2$ be the eigenvalues of $B_0$. Note that $\lambda_1^n=\lambda_2^n=\eta_1^m=\eta_2^m=\omega$.
By Lemma \ref{lem:iso-cusp}, we can write $\lambda_1=t^m$, $\eta_1=t^n$, where $\omega=t^{nm}$, $t\in \bk$. Also write $\lambda_2=u^m$, $\eta_2=u^n$, with $\omega=u^{nm}$. Therefore,
$u=\varepsilon t$, with $\varepsilon\in \Omega_{m,n}^{\bk}=\mu_{nm}^{\bk}-(\mu_n^{\bk} \cup \mu_m^{\bk})$.
This follows since $\lambda_1\neq \lambda_2$ and $\eta_1\neq \eta_2$, hence $\varepsilon^n\neq 1$, $\varepsilon^m \neq 1$.

Now the ordering of $\lambda_1,\lambda_2$ and the ordering of $\eta_1,\eta_2$ produces an action of $\ZZ_2\x \ZZ_2$
on $\Omega^\bk_{n,m}$. This action is necessarily 
free. Therefore the quotient space is given by $\frac14 |\Omega_{m,n}^{\bk}|$ elements. For each of them, we have
to select points in the space (\ref{eqn:P1-}) below, hence accounting for $q-2$ and thus the result follows.
\end{proof}

To understand the kernel of (\ref{eq:map-lambda}), we use the following lemma.

\begin{lem} \label{lem:lem}
Let $A$ be a diagonalizable matrix and let $P(x) \in \CC[x]$ be a polynomial. Then the dimension of the kernel of the matrix $P(A)$
is the number of eigenvalues of $A$ that are roots of $P(x)$.
\end{lem}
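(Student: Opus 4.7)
The plan is to reduce to the diagonal case by conjugation. Since $A$ is diagonalizable, write $A = QDQ^{-1}$ where $D = \mathrm{diag}(\lambda_1, \ldots, \lambda_r)$ collects the eigenvalues of $A$ with their algebraic multiplicities. Polynomials commute with conjugation, so $P(A) = Q\, P(D)\, Q^{-1}$, and in particular $P(A)$ is similar to the diagonal matrix $P(D) = \mathrm{diag}(P(\lambda_1), \ldots, P(\lambda_r))$.

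Since similar matrices have the same rank (equivalently, the same nullity), it suffices to compute $\dim \Ker P(D)$. For a diagonal matrix, the kernel is spanned by those standard basis vectors $e_i$ for which the $i$-th diagonal entry vanishes, so $\dim \Ker P(D)$ equals the number of indices $i$ with $P(\lambda_i) = 0$. This is precisely the count of eigenvalues of $A$ (with multiplicity) that are roots of $P(x)$, which gives the claim.

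There is essentially no obstacle here: the only point requiring a sentence of care is to make clear that the eigenvalues are counted with algebraic multiplicity, so that a repeated eigenvalue which is a root of $P$ contributes its full multiplicity to $\dim \Ker P(A)$. This matches how the lemma will be applied in the sequel, where $A_0, B_0 \in \GL{2}$ and we need to detect when $\Phi_n(A_0)$ or $\Phi_m(B_0)$ drops rank by $1$ or by $2$ according to how many of the two eigenvalues lie in the root set of the cyclotomic-type polynomial $\Phi_n$ or $\Phi_m$.
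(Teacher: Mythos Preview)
Your proof is correct and follows essentially the same route as the paper: diagonalize $A = QDQ^{-1}$, observe $P(A) = QP(D)Q^{-1}$ with $P(D) = \mathrm{diag}(P(\lambda_1),\ldots,P(\lambda_r))$, and read off the kernel dimension from the diagonal entries. Your added remark on counting with multiplicity is a helpful clarification that the paper leaves implicit.
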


\begin{proof}
Write $A = QDQ^{-1}$ with $D = \textrm{diag}(\lambda_1, \ldots, \lambda_r)$ a diagonal matrix. Then $P(A) = QP(D)Q^{-1}$ and,
since $P(D) = \textrm{diag}(P(\lambda_1), \ldots, P(\lambda_r))$, 
the dimension of its kernel is the number of eigenvalues that are also roots of $P$.
\end{proof}

Recall that we are assuming that the characteristic of $\bk$ does not divide $n,m$.
Using Lemma \ref{lem:lem}, we get that the dimension of the kernel of $\Phi_n(A_0)$ is the number of eigenvalues of $A_0$
that belong to $\hat{\mu}_n^\bk= \mu_n^\bk - \{1\}$, 
and analogously for $\Phi_m(B_0)$. Let $\lambda_1, \lambda_2$ be the eigenvalues of $A_0$ and $\eta_1, \eta_2$
the eigenvalues of $B_0$. Recall that $\lambda_1 \neq \lambda_2$ and $\eta_1 \neq \eta_2$ since otherwise $(A_0, B_0)$ is not irreducible.
Then, we have the following options:

\begin{enumerate}

\item Case $\lambda_1, \lambda_2 \in \hat{\mu}_n^\bk$ and $\eta_1, \eta_2 \in \hat{\mu}_m^\bk$. In this situation, $\Lambda \equiv 0$ so
$\Ker{\Lambda} = \bk^4$. Hence, if we denote by $\Rep{m,n}^{\irr,(1)}(\AGL{2})$ and $\Rep{m,n}^{\irr,(1)}(\GL{2})$ 
the corresponding strata in (\ref{eq:map-fibration}) of the total and base space, respectively, we have that
	$$
 \left[\Rep{m,n}^{\irr,(1)}(\AGL{2})\right] = \left[\Rep{m,n}^{\irr,(1)}(\GL{2})\right][\bk^4].
	$$	
To get the motive of $\Rep{m,n}^{\irr,(1)}(\GL{2})$, the eigenvalues define a fibration
\begin{equation}\label{eq:eigenvalues}
	\Rep{m,n}^{\irr,(1)}(\GL{2}) \longrightarrow ((\hat{\mu}_n^\bk)^2 - \Delta)/\ZZ_2 \times ((\hat{\mu}_m^\bk)^2 - \Delta)/\ZZ_2,
\end{equation}
where $\Delta$ is the diagonal and $\ZZ_2$ acts by permutation of the entries. The fiber of this map is the collection of
representations $(A_0, B_0) \in \Rep{m,n}^{\irr}(\GL{2})$ with fixed eigenvalues, denoted by $\Rep{m,n}^{\irr}(\GL{2})_{0}$. An element of $\Rep{m,n}^{\irr}(\GL{2})_{0}$ 
is completely determined by the two pairs of eigenspaces of $(A_0, B_0)$ up to conjugation. Since the representation $(A_0, B_0)$
must be irreducible, these eigenspaces must be pairwise distinct. Hence, this variety is
$\Rep{m,n}^{\irr}(\GL{2})_{0} = (\PP_\bk^1)^4 - \Delta_c$,
 where $\Delta_c \subset (\PP_\bk^1)^4$ is the `coarse diagonal' of tuples with two repeated entries. There is a free and closed action of $\PGL{2}$ on $(\PP_\bk^1)^4$ with quotient 
	\begin{equation}\label{eqn:P1-}
 \frac{(\PP_\bk^1)^4 - \Delta_c}{\PGL{2}} = \PP_\bk^1-\{0,1,\infty\}.
	\end{equation}
To see this, note that there is a $\PGL{2}$-equivariant map that sends the first three entries to
$0,1,\infty \in \PP_\bk^1$ respectively, so the orbit is completely determined by the image of the fourth point under this map. 
Hence, $[\Rep{m,n}^{\irr}(\GL{2})_{0}] = [\PP_\bk^1-\{0,1,\infty\}]\, [\PGL{2}]=(q-2)(q^3-q)$. 
	
Coming back to the fibration (\ref{eq:eigenvalues}), we have that the base space is a set of 
$\binom{\xi_n^{\bk}-1}{2}\binom{\xi_m^{\bk}-1}{2}$ points, so
	$$
 \left[\Rep{m,n}^{\irr,(1)}(\GL{2})\right] = \frac{(\xi^\bk_n-1)(\xi^\bk_n-2)(\xi^\bk_m-1)(\xi^\bk_m-2)}{4}(q-2)(q^3-q),
	$$
and thus,
	$$
 \left[\Rep{m,n}^{\irr,(1)}(\AGL{2})\right] = \frac{(\xi^\bk_n-1)(\xi^\bk_n-2)(\xi^\bk_m-1)(\xi^\bk_m-2)}{4}(q^5-2q^4)(q^3-q).
	$$
	
\item Case $\lambda_1, \lambda_2 \in \hat{\mu}_n^\bk$, $\eta_1 \in \hat{\mu}_m^\bk$ and $\eta_2 = 1$. In this case, $\Ker{\Lambda} = \bk^3$
and the base space is made of $\binom{\xi^\bk_n -1}{2} (m-1)$ copies of $\Rep{m,n}^{\irr}(\GL{2})_{0}$. 
Hence, this stratum contributes
 \begin{align*}
 \left[\Rep{m,n}^{\irr,(2)}(\AGL{2})\right] &= \frac{(\xi^\bk_n-1)(\xi^\bk_n-2)(\xi^\bk_m-1)}{2}\left[\PP^1-\left\{0,1,\infty\right\}\right]\, [\PGL{2}]\,[\bk^3] \\
 &= \frac{(\xi^\bk_n-1)(\xi^\bk_n-2)(\xi^\bk_m-1)}{2}(q^4-2q^3)(q^3-q).
	\end{align*}

\item Case $\lambda_1 \in \hat{\mu}_n^\bk$, $\lambda_2 = 1$ and $\eta_1,\eta_2 \in \hat{\mu}_m^\bk$. This is analogous to the previous stratum and contributes
 \begin{align*}
 \left[\Rep{m,n}^{\irr,(3)}(\AGL{2})\right] &= \frac{(\xi^\bk_m-1)(\xi^\bk_n-1)(\xi^\bk_m-2)}{2}\left[\PP^1-\left\{0,1,\infty\right\}\right]\, [\PGL{2}]\, [\bk^3] \\
 &= \frac{(\xi^\bk_m-1)(\xi^\bk_n-1)(\xi^\bk_m-2)}{2}(q^4-2q^3)(q^3-q).
 \end{align*}

\item Case $\lambda_1 \in \hat{\mu}_n^\bk$, $\lambda_2 = 1$ and $\eta_1 \in \hat{\mu}_m^\bk$, $\eta_2 = 1$. Now, $\Ker{\Lambda} = \bk^2$ and this stratum contributes 
 \begin{align*}
  \left[\Rep{m,n}^{\irr,(4)}(\AGL{2})\right] &= (\xi^\bk_m-1)(\xi^\bk_n-1)\left[\PP^1-\left\{0,1,\infty\right\}\right]\, [\PGL{2}]\,[\bk^2] \\
  &= (\xi^\bk_m-1)(\xi^\bk_n-1)(q^3-2q^2)(q^3-q).
 \end{align*}

\item Case $\lambda_1 \not\in \hat{\mu}_n^\bk, \lambda_2 \not\in \hat{\mu}_n^\bk, \eta_1 \not\in \hat{\mu}_m^\bk$ and $\eta_2 \not\in \hat{\mu}_m^\bk$.
Recall that by Corollary \ref{cor;1}, these conditions are all equivalent.
In this situation, $\Lambda$ is surjective so $\Ker{\Lambda} = \bk^2$. The motive 
$\left[\Rep{m,n}^{\irr}(\GL{2})\right]$ is given in Proposition \ref{prop:GL2}.
To this space, we have to remove the orbits corresponding to the forbidden eigenvalues, which are 
  \begin{align*}
 \ell_{m,n} =& \, \frac{(\xi^\bk_n-1)(\xi^\bk_n-2)(\xi^\bk_m-1)(\xi^\bk_m-2)}{4} + \frac{(\xi^\bk_n-1)(\xi^\bk_n-2)(\xi^\bk_m-1)}{2} \\ &+ \frac{(\xi^\bk_m-1)(\xi^\bk_n-1)(\xi^\bk_m-2)}{2} + (\xi^\bk_m-1)(\xi^\bk_n-1) =
\frac14 \xi^\bk_m\xi^\bk_n(\xi^\bk_m-1)(\xi^\bk_n-1) 
 \end{align*} 
 copies of $[\Rep{m,n}^{\irr}(\GL{2})_{0}] = [\PP^1-\{0,1,\infty\}]\, [\PGL{2}]$. Hence this stratum contributes
  \begin{align*}
 \left[\Rep{m,n}^{\irr,(5)}(\AGL{2})\right]  =& \left(\left[\Rep{m,n}^{\irr}(\GL{2})\right] - \ell_{m,n}(q-2)(q^3-q)\right)\left[\bk^2\right]
  \\
  =& \left[\Rep{m,n}^{\irr}(\GL{2})\right] q^2-  \frac14 \xi^\bk_m\xi^\bk_n(\xi^\bk_m-1)(\xi^\bk_n-1) (q^3-2q^2)(q^3-q) \\
=&\,  \frac14 (q^3-q) (q^3-2q^2) (\xi^\bk_m-1)(\xi^\bk_n-1) \big(  q-1 -  \xi^\bk_m\xi^\bk_n\big),
 \end{align*} 
using Proposition \ref{prop:GL2} which says that
$[\Rep{m,n}^{\irr}(\GL{2})] = \frac14(q^3-q)(\xi^\bk_m-1)(\xi^\bk_n-1) (q-2) (q-1)$.

\end{enumerate}

Adding up all the contributions, we get 
\begin{align*}
	\left[\Rep{m,n}^{\irr}(\AGL{2})\right] =& \left[\Rep{m,n}^{\irr,(1)}(\AGL{2})\right] + \left[\Rep{m,n}^{\irr,(2)}(\AGL{2})\right]
	+ \left[\Rep{m,n}^{\irr,(3)}(\AGL{2})\right] \\
	& + \left[\Rep{m,n}^{\irr,(4)}(\AGL{2})\right] + \left[\Rep{m,n}^{\irr,(5)}(\AGL{2})\right]  \\
	= &\, \frac{(\xi^\bk_m-1)(\xi^\bk_n-1)(q^4-3q^3+2q^2)(q^3-q)}{4}  \left( (\xi^\bk_m-2)(\xi^\bk_n-2)q+\xi^\bk_m\xi^\bk_n-3 \right).
\end{align*}

\subsection{The reducible stratum}

Recall our assumption that $\charF(\bk)$ does not divide $n$ and $m$.
In this section, we consider the case in which $(A_0, B_0) \in \Rep{m,n}^{\red}(\GL{2})$ 
is a reducible representation. 
After a change of basis, since $A_0^n = B_0^m$, we can suppose that $(A_0, B_0)$ has exactly one of the following three forms:
 $$
 \textrm{(A)} \left(\begin{pmatrix}t_1^m & 0 \\ 0 & t_2^m\end{pmatrix}, \begin{pmatrix}t_1^n & 0 \\ 0 & t_2^n\end{pmatrix}\right), 
   \textrm{(B)} \left(\begin{pmatrix}t^m & 0 \\ 0 & t^m\end{pmatrix}, \begin{pmatrix}t^n & 0 \\ 0 & t^n\end{pmatrix}\right),
 \textrm{(C)} \left(\begin{pmatrix}t^m & 0 \\ x & t^m\end{pmatrix}, \begin{pmatrix}t^n & 0 \\ y & t^n\end{pmatrix}\right),
 $$
with $t_1, t_2, t \in \bk^*$, $x, y \in \bk$ and satisfying $t_1 \neq t_2$ and $(x,y) \neq (0,0)$.

Restricting to the representations of each stratum $S = (\textrm{A}), (\textrm{B}), (\textrm{C})$, 
we have a morphism
 \begin{equation}\label{eq:map-fibration-A}
 \Rep{m,n}^{S}(\AGL{2}) \longrightarrow \Rep{m,n}^{S}(\GL{2}),
 \end{equation}
whose fiber is the kernel of the linear map (\ref{eq:map-lambda}).

\subsubsection{Case \textrm{(A)}}\label{sec:red-A}
In this case, as for the irreducible part of Section \ref{sec:irred-strat}, the kernel of $\Lambda$ depends on whether $t_1, t_2$ are
roots of the polynomial $\Phi_l$. In this case the base space is
 $$
 \Rep{m,n}^{ \textrm{(A)}}
 (\GL{2}) = \left(\left((\bk^*)^2 - \Delta\right) \times \frac{\GL{2}}{\GL{1} \times \GL{1}}\right)/\ZZ_2,
 $$
with the action of $\ZZ_2$ given by exchange of eigenvalues and eigenvectors. Using Lemma \ref{lem;ac} and (\ref{eqn:2bis}), 
we have
 \begin{align*}
 \left[\Rep{m,n}^{\mathrm{(A)}}(\GL{2})\right] &=
 [(\bk^*)^2 - \Delta]^+ 
  \left[\frac{\GL{2}}{\GL{1} \times \GL{1}}\right]^+ 
  + [(\bk^*)^2 - \Delta]^-
   \left[\frac{\GL{2}}{\GL{1} \times \GL{1}}\right]^- \\
 &= q^2(q-1)^2 - q(q-1).
\end{align*}

On the other hand, if we fix the eigenvalues of $(A_0, B_0)$ as in Section \ref{sec:irred-strat}, 
the corresponding fiber $\Rep{m,n}^{\mathrm{(A)}}(\GL{2})_0$ is
$$
 \left[\Rep{m,n}^{\mathrm{(A)}}(\GL{2})_0\right] = \left[\frac{\GL{2}}{\GL{1} \times \GL{1}}\right] = q^2 + q.
$$

As in Section \ref{sec:AGL1}, set $\Omega^{\bk}_{m,n} = \mu_{mn}^\bk - (\mu_{m}^\bk \cup \mu_{n}^\bk)$ for those 
$t \in \bk^*$ such that $\Phi_n(t^m) = 0$ and $\Phi_m(t^n) = 0$. With this information at hand, we compute for each stratum:

\begin{enumerate}
 \item Case $t_1, t_2 \in \Omega^{\bk}_{m,n}$. In this situation, $\Lambda \equiv 0$ so $\Ker{\Lambda} = \bk^4$. 
  The eigenvalues yield a fibration
\begin{equation*}
	\Rep{m,n}^{\mathrm{(A)},(1)}(\GL{2}) \longrightarrow \left((\Omega_{m,n}^{\bk})^2 - \Delta\right)/\ZZ_2
\end{equation*}
whose fiber is $\Rep{m,n}^{\mathrm{(A)}}(\GL{2})_0$. Observe that $\left((\Omega_{m,n}^{\bk})^2 - \Delta\right)/\ZZ_2$ 
is a finite set of $(\xi^\bk_m-1)(\xi^\bk_n-1)((\xi^\bk_m-1)(\xi^\bk_n-1)-1)/2$ points, so we have
\begin{align*}
	\left[\Rep{m,n}^{\mathrm{(A)},(1)}(\AGL{2})\right] &= \left[\Rep{m,n}^{\mathrm{(A)},(1)}(\GL{2})\right][\bk^4] \\
	&= \left[\Rep{m,n}^{\mathrm{(A)}}(\GL{2})_0\right][\bk^4]\left[\left((\Omega_{m,n}^{\bk})^2 - \Delta\right)/\ZZ_2\right] \\
	&= \frac{(\xi^\bk_m-1)(\xi^\bk_n-1)(\xi^\bk_m\xi^\bk_n-\xi^\bk_m-\xi^\bk_n)}{2}q^4(q^2+q).
\end{align*}

\item Case $t_1 \in \Omega^{\bk}_{m,n}$ but $t_2 \not\in \Omega^{\bk}_{m,n}$ (or vice-versa, the order is not important here).
Now, we have a locally trivial fibration
$$
 \Rep{m,n}^{\mathrm{(A)},(2)}(\GL{2}) \longrightarrow \Omega^{\bk}_{m,n} \times \left(\bk^*-\Omega^{\bk}_{m,n}\right),
$$
with fiber $\Rep{m,n}^{\mathrm{(A)}}(\GL{2})_0$. The kernel of $\Lambda$ is $\bk^3$, so this stratum contributes
\begin{align*}
 \left[\Rep{m,n}^{\mathrm{(A)},(2)}(\AGL{2})\right] = (\xi^\bk_m-1)(\xi^\bk_n-1)(q -\xi^\bk_m\xi^\bk_n + \xi^\bk_m + \xi^\bk_n -2) q^3(q^2+q).
\end{align*}

\item Case $t_1, t_2 \not\in\Omega^{\bk}_{m,n}$. The kernel is now $\bk^2$ and we have a fibration
$$
 \Rep{m,n}^{\mathrm{(A)},(3)}(\GL{2}) \longrightarrow B,
$$
where the motive of the base space $B$ is 
\begin{align*}
 [B] &=[(\bk^*)^2-\Delta]^+  - \left[(\Omega_{m,n}^{\bk})^2 - \Delta\right]^+ 
- \left[\Omega^{\bk}_{m,n}\right] \left(q-1-[\Omega^{\bk}_{m,n}]\right) =\\
 &=  (q-1)^2 -\frac{(\xi^\bk_m-1)(\xi^\bk_n-1)(\xi^\bk_m\xi^\bk_n - \xi^\bk_m - \xi^\bk_n )}{2}\\
 & \;\;\;\;- (\xi^\bk_m-1)(\xi^\bk_n-1)(q-\xi^\bk_m\xi^\bk_n + \xi^\bk_m + \xi^\bk_n -2) \\
 &= q^2 - (\xi^\bk_m\xi^\bk_n - \xi^\bk_m - \xi^\bk_n +3)q -\frac14 (\xi^\bk_m-1)(\xi^\bk_n-1)(\xi^\bk_m\xi^\bk_n-8).
\end{align*}
Therefore, this space contributes
\begin{align*}
 \big[\Rep{m,n}^{\mathrm{(A)},(3)}&(\AGL{2})\big] = 
 \left[\Rep{m,n}^{\mathrm{(A)},(3)}(\GL{2})\right][\bk^2]  \\
 &= q^2(q^2+q)\left(q^2 - (\xi^\bk_m\xi^\bk_n - \xi^\bk_m - \xi^\bk_n +3)q+ \frac14 (\xi^\bk_m-1)(\xi^\bk_n-1)(\xi^\bk_m\xi^\bk_n-8)\right).
\end{align*}
\end{enumerate} 

Adding up all the contributions, we get that

\begin{align*}
 \left[\Rep{m,n}^{\mathrm{(A)}}(\AGL{2})\right] =\; &
 (q^2+q)q^2\bigg(\frac12 (\xi^\bk_m-1)(\xi^\bk_n-1)(\xi^\bk_m\xi^\bk_n - \xi^\bk_m - \xi^\bk_n )(q^2-1) \\
 & + (\xi^\bk_m-1)(\xi^\bk_n-1)(q - \xi^\bk_m\xi^\bk_n + \xi^\bk_m + \xi^\bk_n -2) (q-1) + (q-1)^2\bigg).
 \end{align*}

\subsubsection{Case \textrm{(B)}}\label{sec:red-B} In this setting, this situation is simpler. Observe that the adjoint
action of $\GL{2}$ on the vectorial part is trivial, so the corresponding $\GL{2}$-representation variety is just
$$
	\Rep{m,n}^{\mathrm{(B)}}(\GL{2}) = \bk^*.
$$
Analogously, the variety with fixed eigenvalues, $\Rep{m,n}^{\mathrm{(B)}}(\GL{2})_0$, is just a point.
With these observations, we obtain that:
\begin{enumerate}
\item If $t \in \Omega^{\bk}_{m,n}$, then $\Ker{\Lambda} = \bk^4$. We have a fibration
\begin{equation*}
 \Rep{m,n}^{\mathrm{(B)},(1)}(\GL{2}) \longrightarrow \Omega^{\bk}_{m,n}
\end{equation*}
whose fiber is $\Rep{m,n}^{\mathrm{(B)}}(\GL{2})_0$. Hence, this stratum contributes
$$
 \left[\Rep{m,n}^{\mathrm{(B)},(1)}(\AGL{2})\right] =
  \left[\Rep{m,n}^{\mathrm{(B)},(1)}(\GL{2})\right][\bk^4] = (\xi^\bk_m-1)(\xi^\bk_n-1)q^4\, .
$$

\item If $t \not\in \Omega^{\bk}_{m,n}$, then $\Ker{\Lambda} = \bk^2$. We have a fibration
\begin{equation*}
 \Rep{m,n}^{\mathrm{(B)},(2)}(\GL{2}) \longrightarrow \bk^*-\Omega^{\bk}_{m,n}.
\end{equation*}
Thus, the contribution of this stratum is
$$
 \left[\Rep{m,n}^{\mathrm{(B)},(2)}(\AGL{2})\right] = (q-1-(\xi^\bk_m-1)(\xi^\bk_n-1))q^2.
$$
\end{enumerate}
The total contribution is thus
 $$
  \left[\Rep{m,n}^{\mathrm{(B)}}(\AGL{2})\right] = (\xi^\bk_m-1)(\xi^\bk_n-1)(q^4-q^2)+(q-1)q^2.
$$

\subsubsection{Case (C)}\label{sec:red-C} 
In this case, an extra calculation must be done to control the off-diagonal entry. If $(A_0, B_0)$ has the form
$$ \left(\begin{pmatrix}t^m & 0 \\ x & t^m\end{pmatrix}, \begin{pmatrix}t^n & 0 \\ y & t^n\end{pmatrix}\right),
$$
then the condition $A_0^n = B_0^m$ reads as
$$
 \begin{pmatrix}t^{mn} & 0 \\ nt^{m(n-1)}x & t^{mn}\end{pmatrix} = \begin{pmatrix}t^{mn} & 0 \\ mt^{n(m-1)}y & t^{mn}\end{pmatrix}.
$$
The later conditions reduce to $nt^{m(n-1)}x = mt^{n(m-1)}y$ and, since $t \neq 0$, this means that $(x,y)$ should lie in a
line minus $(0,0)$. The stabilizer of a Jordan type matrix in $\GL{2}$ is the subgroup $U = (\bk^*)^2 \times \bk \subset \GL{2}$
of upper triangular matrices. Hence, the corresponding $\GL{2}$-representation variety is
$$
	\Rep{m,n}^{\mathrm{(C)}}(\GL{2}) = (\bk^*)^2 \times \GL{2}/U.
$$
In particular, $\left[\Rep{m,n}^{\mathrm{(C)}}(\GL{2})\right] 
= (q-1)^2 (q^4-q^3-q^2+q)/q(q-1)^2 = (q-1)^2(q+1)$. Moreover, if we fix 
the eigenvalues we get that $\left[\Rep{m,n}^{\mathrm{(C)}}(\GL{2})_0\right] = \left[\bk^* \times \GL{2}/U\right] = (q-1)(q+1)$.

To analyze the condition $\Phi_n(A_0) = \Phi_m(B_0)$, a straightforward computation reduces it to
$$
	\begin{pmatrix}\Phi_n(t^m) & 0 \\ \displaystyle{x\sum_{i=1}^{n-1} it^{m(i-1)}} & \Phi_n(t^m)\end{pmatrix} = \begin{pmatrix}\Phi_m(t^n) & 0 
	\\ \displaystyle{y\sum_{i=1}^{m-1} it^{n(i-1)}} & \Phi_m(t^n)\end{pmatrix}.
$$
The off-diagonal entries can be recognized as $x\Phi_n'(t^m)$ and $y\Phi_m'(t^n)$ respectively, where $\Phi_l'(x)$ denotes the formal
derivative of $\Phi_l(x)$. Observe that $\Phi_l(x)(x-1) = x^l-1$, whose derivative is $lx^{l-1}$, so $\Phi_l$ has no repeated roots provided that $\charF(\bk)$ does not divide $l$. Hence, with our assumptions on $\charF(\bk)$ we have that $\Phi_n(t^m), \Phi_m(t^n), \Phi_n'(t^m)$
and $\Phi_m'(t^n)$ cannot vanish simultaneously. Therefore, stratifying according to the kernel of $\Lambda$ we get the following two possibilities:

\begin{enumerate}
\item If $t \in \Omega^{\bk}_{m,n}$, then $\Ker{\Lambda} = \bk^3$. We have a fibration
\begin{equation*}
 \Rep{m,n}^{\mathrm{(C)},(1)}(\GL{2}) \longrightarrow \Omega^{\bk}_{m,n}
\end{equation*}
whose fiber is $\Rep{m,n}^{\mathrm{(C)}}(\GL{2})_0$. Hence, this stratum contributes
$$
 \left[\Rep{m,n}^{\mathrm{(C)},(1)}(\AGL{2})\right] = \left[\Rep{m,n}^{\mathrm{(C)},(1)}(\GL{2})\right][\bk^3] = (\xi^\bk_m-1)(\xi^\bk_n-1)q^3(q-1)(q+1).
$$

\item If $t \in \bk^*-\Omega^{\bk}_{m,n}$, then $\Ker{\Lambda} = \bk^2$. The fibration we get is now
\begin{equation*}
	\Rep{m,n}^{\mathrm{(C)},(2)}(\GL{2}) \longrightarrow \bk^*-\Omega^{\bk}_{m,n}\, .
\end{equation*}
Therefore, this stratum contributes
\begin{align*}
 \left[\Rep{m,n}^{\mathrm{(C)},(2)}(\AGL{2})\right] &= \left[\Rep{m,n}^{\mathrm{(C)},(2)}(\GL{2})\right][\bk^2] \\ 
 &= \left[\Rep{m,n}^{\mathrm{(C)}}(\GL{2}) - (\xi^\bk_m-1)(\xi^\bk_n-1)\Rep{m,n}^{\mathrm{(C)},(2)}(\GL{2})_0\right][\bk^2] \\
 &= \left((q-1)^2(q+1) - (\xi^\bk_m-1)(\xi^\bk_n-1)(q-1)(q+1)\right)q^2.
\end{align*}
\end{enumerate}

Adding up all the contributions, we get that
$$
 \left[\Rep{m,n}^{\mathrm{(C)}}(\AGL{2})\right] = (q-1)^2(q+1)q^2 + (\xi^\bk_m-1)(\xi^\bk_n-1)(q-1)(q+1)(q^3-q^2).
$$

Putting the results of Sections \ref{sec:irred-strat}, \ref{sec:red-A}, \ref{sec:red-B} and \ref{sec:red-C} together, we prove the
second formula in Theorem \ref{thm:main}.

\end{document}